\numberwithin{equation}{section}
\DeclareMathOperator*{\esssup}{ess\,sup}
\def\O{\Omega}
\def\n{\nabla}
\def\pd{\partial}
\def\iO{\int_\O}
\def\intl#1{\int\limits_{#1}}
\def\intll#1#2{\int\limits_{#1}^{#2}}
\def\dm{|\hskip-0.05cm|}
\def\displ{\displaystyle}
\def\VS{\vspace{6pt}\\\displ }
\def\rf#1{{\rm(\ref{#1})}}
\def\chiu{\hfill$\displaystyle\vspace{4pt}
\underset\Box\null$\par}
\def\R{\Bbb R}
\def\N{\Bbb N}
\def\be{\begin{equation}}
\def\ba{\begin{array}}
\def\ea{\end{array}}
\def\ee{\end{equation}}
\def\div{\mathop{\mathrm{div}}\nolimits}
\def\per{\mathop{\mathrm{per}}\nolimits}
\def\vec#1{\boldsymbol{#1}}
\def\bv{\vec{v}}
\def\bx{\vec{x}}
\def\bu{\vec{u}}
\def\bw{\vec{w}}
\def\bF{\vec{f}}
\def\bB{\vec{B}}
\def\bA{\vec{A}}
\def\S{\vec{S}}
\def\b{\vec{b}}
\def\bphi{\vec{\Phi}}
\def\bph{\vec{\varphi}}
\def\e{\vec{e}}
\def\x{x}
\def\cD{\mathcal{D}}
\def\bo{\vec{\omega}}
\title{Existence of regular time-periodic solution to shear-thinning fluids.}\date{}
\author{Anna Abbatiello and Paolo Maremonti}\thanks{Dipartimento di Matematica e Fisica, Universit\`a degli Studi della Campania ``Luigi Vanvitelli", via A. Vivaldi  n.43, 81100 Caserta, Italy. Email addresses: {\tt anna.abbatiello@unicampania.it, paolo.maremonti@unicampania.it} }
\newtheorem{thm}{Theorem}[section]
\newtheorem{cor}[thm]{Corollary}
\newtheorem{lem}[thm]{Lemma}
\theoremstyle{definition}
\newtheorem{defin}[thm]{Definition}
\newtheorem{rem}[thm]{Remark}
\newtheorem{ineq}[thm]{Inequality}
\numberwithin{equation}{section}
\begin{document}
\maketitle

\begin{abstract}
In this note we investigate the existence of time-periodic solutions to the $p$-Navier-Stokes system in the singular case of $p\in(1,2)$,  that describes the flows of an incompressible shear-thinning fluid.  
In the $3D$  space-periodic setting and for $p\in[\frac53,2)$ we prove the existence of a regular time-periodic solution corresponding to a  time periodic  force data which is assumed small in a suitable sense. As a particular case we obtain ``regular'' steady solutions.
\end{abstract}
\section{Introduction}
The  existence of time periodic solutions for shear-thinning and electro-rheological fluids was firstly studied in \cite{Lions-per}, where a \underline{modified} $p$-Navier-Stokes case for $p\geq 3$ constant is considered, and in \cite{C} for the $p(t,\x)$-Stokes case. The main purpose of this note is to investigate the question for 
the full $p$-Navier Stokes system. However the difficulties of the problem lead to consider a special case of the problem if compared with the ones 
studied in  \cite{C, Lions-per}. Nevertheless, our results are on the wake of the ones obtained  in \cite{BL}\,\footnote{\, The authors are grate to
 the editor in chief Prof. G.P. Galdi who submitted  to their attention the interesting paper \cite{BL}.}.  To better introduce the   results contained in  \cite{BL} and to show the ours  related to shear-thinning fluids, we state the special problem studied in this note.  
 We consider  \be\label{system}
\ba{l}\displ
\bv_t + \div(\bv\otimes \bv) - \div \S(\cD\bv) +\nabla \pi =  \bF \ \mbox{in} \ (0, T)\times\O, \\ \displ
\div\bv = 0 \ \mbox{in} \ (0, T)\times\O, \\ 
\ea\ee
where $\bv=(v_1, v_2, v_3) $ is the vector field of velocity, $\bv_t$  is the time derivative of $\bv$, $\S$ is the deviatoric part of the Cauchy stress tensor constitutively determined by
\be\label{S}
\S(\cD\bv)=|\cD\bv|^{p-2}\cD\bv \ \ \mbox{with} \ p\in (1, 2),
\ee
where  $\cD\bv:=\frac{1}{2}(\n\bv+(\n\bv)^T)$ symmetric part of the spatial gradient of $\bv$, $\pi$ is the pressure scalar field or mean normal stress,  finally  $\bF=(f_1, f_2, f_3) $ is the prescribed body force.   We study the system \eqref{system} with \eqref{S} in $(0, T)\times\O$ where $T$ is a positive real number and $\O = (0, 1)^3 \subset \R^3$. We endow the problem with space-periodic boundary
 conditions, i.e. we require 
\be\ba{l}\displ\label{periodicity}
\bv_{|\Gamma_j}=\bv_{|\Gamma_{j+3}}, \  \n\bv_{|\Gamma_j}=\n\bv_{|\Gamma_{j+3}},\VS
\pi_{|\Gamma_j}=\pi_{|\Gamma_{j+3}},
\ea
\ee
where $\Gamma_j:=\pd\O\cap\{x_j=0\}$ and $\Gamma_{j+3}:=\pd\O\cap\{x_j=1\}$ with $j=1,2,3$.
  \par In \cite{BL}, assuming  $p\in (\frac95,2)$ and 
  the data force $\bF$  time periodic of period $T>0$,   the authors realize the existence of weak solutions which
   are  time periodic with period $T$. They achieve the result of existence by employing  a fixed point  theorem. For analogous questions related to  the Newtonian fluids, this approach is essentially due to Prodi by the Tychonov fixed point theorem   and to Prouse by the Brouwer fixed point theorem,  see respectively \cite{PR} and  \cite{Pru}.  Brouwer's fixed point theorem  working in the finite dimensional case, it is employed on the Galerkin approximations (that are finite combinations of elements). Then there is the question of the convergence to a limit of the Galerkin approximation. In \cite{BL} they solve the convergence questions requiring a restriction to the power  $p\in(\frac95,2)$.    \par In this note   we follow a   different point of view. In the sense that we look for solutions corresponding to small data. This allows us to exhibit  the results for a  wider range of power-law index $p$ and ``more regular solutions''.  More precisely,
assuming in (\ref{system}) $p\in \left[\frac53,2\right)$,   we are able to obtain the existence of a time-periodic solution to \eqref{system}, subject to a small time-periodic force. The proof employs an idea by Serrin exhibited in \cite{S}. 
Assuming the existence of    solutions defined for all $t>0$ and their regularity,    in the case of $3D$-Navier-Stokes  boundary value problem subject to a time periodic force, in \cite{S} Serrin proves that any solution asymptotically tends to a time periodic solution whose time-period is the same of the force.
  This approach is formal in the sense that the assumptions on the existence of the solutions enjoying the regularity required by Serrin is still an open problem. Nevertheless, as made in \cite{M}  for $\Omega\equiv\mathbb R^3$,   for small data and in suitable function spaces Serrin's approach can be used with success.     
\par In this note we follow the same approach of \cite{M}. Firstly, for $p\in  [\frac{5}{3}, 2)$,  under the assumption of small data, we furnish the existence of a regular solution (Theorem\,\ref{thmA}), in a   sense   specified later, to the system \eqref{system} with \eqref{S} and \eqref{periodicity}. Then, under the assumption of periodicity for force data  we realize the time periodic solution. It is the case to make precise the claim ``small data force''. Actually we assume that   $\bF\in L^\infty(0,\infty;L^q(\O))$, that, as in the case of the Navier-Stokes equations,
 makes the difference since no time-integrability properties are required. The smallness is required for $\esssup_{t\geq0} \dm \bF(t)\dm_q$. As a particular case we consider $f$ independent of $t$, which allows us to deduce the existence of steady solutions. \par Our results of existence seem of some interest in connection with the fact they are obtained   for the singular case    $p\in[\frac53,2)$. We point out that the limit case of $p=\frac53$ and the regularity $\nabla^2\bv\in L^2(0,T;L^\frac4{4-p}(\Omega))$ is achieved by a trick, employed as in \cite{CGM},   connected with the reverse  H\"older's inequality (see Inequality\,\ref{ineq} and Lemma\,\ref{lemma6}).
This represents an improvement with respect the results already known. Actually, in \cite{MA95} for the problem of kind \eqref{system}-\eqref{periodicity} provided $\bF\in L^{p'}((0,T)\times\Omega)$ suitable small together with the initial data, it is  showed the existence of a solution with $\nabla^2\mathbf{v}\in L^2((0, T); W^{2,p}(\Omega)) $ for $p\in(\frac53,2)$.
 In \cite{BDR} for the evolution problem they exhibit a solution local in time subject to an integrable force $\bF\in L^\infty(0,T; W^{1,2}(\Omega))\cap W^{1,2}((0,T); L^2(\Omega))$ with  $\nabla^2\bv\in L^{\frac{p(5p-6)}{2-p}}((0, T); W^{2,\frac{3p}{p+1}}(\Omega)) $ for $p\in(\frac75,2)$, and for the steady problem $\nabla^2\bv\in W^{2,\frac{3p}{p+1}}(\Omega) $  where it is assumed $p\in (\frac95,2)$ and space-periodic domains as in our case.\par We   restrict our considerations to the tridimensional case just for the simplicity, but the technique also works in the two-dimensional case and   the results hold for $p\in(1,2)$. \par In order to better explain  our results we introduce some spaces of functions. For the sequel it is worth to note that vector valued functions are printed in boldface while scalar ones in italic mode and we do not distinguish between space of scalar functions and space of vector-valued functions.  \par
As stated before let $\O$ be the  cube $(0, 1)^3 $  in $\R^3$ of points $\x=(x_1, x_2,x_3)$ and let $\O_T$ denote $(0, T)\times\O$.  For a domain $G$, that can be $\O$ or $\O_T$, for $q\geq 1$ and  $m\in \N$ we keep the notation $(L^q(G), \|\cdot\|_{q,G})$ and $(W^{m,q}(G), \|\cdot\|_{m,q,G})$ for the standard Lebesgue and Sobolev spaces and their associated norms.  
The subscript ``{$_{\rm per}$}" means that space-periodic functions having zero mean value are considered (i.e. each considered function $f$ satisfies $f(\x+\e_i)=f(\x)$, $i=1, 2, 3$ where $(\e_1, \e_2,\e_3)$ is the canonical basis of $\R^3$ and $\int_\O f=0$). Next let us establish the notation for the spaces of solenoidal space-periodic functions with values in $\R^3$  by setting
$$\mathscr{C}_{\rm per}(\O):=\left\{ \bphi\in \mathcal{C}^\infty_{\rm per}(\O, \R^3), \int_\O \bphi(\x)\,d\x=0, \div\bphi=0 \mbox{ in } \O\right\},$$
and 
$$J^q_{\rm per}(\O):=\overline{\mathscr{C}_{\rm per}(\O)}^{\|\cdot\|_q}, \ \ J^{m,q}_{\rm per}(\O):=\overline{\mathscr{C}_{\rm per}(\O)}^{\|\cdot\|_{m,q}}.$$ Since there is no confusion, in the sequel we omit the subscript $\null_{\rm per}$.
In particular when $q=2$ we denote $J(\O):=J^2(\O)$. 
Finally, if $X$ is a Banach space then $L^q(0, T; X)$ stands for the Bochner space of functions from $(0, T)$ to $X$ and such that their norm in $X$ is $q$-integrable, as well $ {C}(0,T; X)$ is the space of continuous functions from $[0, T]$ to $X$ endowed of the standard norm of the ${\rm sup}$. 
\par The subscript ``$t$'' denotes the differentiation with respect to time, the symbol $\cD\bph$ is the symmetric part of the gradient of a sufficiently smoooth vector function $\bph$, i.e.  $\cD\bph := \frac{1}{2}(\n\bph+(\n\bph)^T)$,  $(\cdot, \cdot)$ is used for the scalar product in $L^2(\O)$. 
\begin{defin}
A field $\bu: (0, +\infty) \times \O \to\R^3$ is said to be {\emph{time-periodic}} with period $\mathcal{T}$ if 
\begin{itemize}
\item[(i)] $\displaystyle\sup_{t\in[0,\mathcal{T}]} \|\bu(t)\|_X<+ \infty$ with $(X, \|\cdot\|_X)$ a Banach space,
\item[(ii)] $\|\bu(t+\mathcal{T})-\bu(t)\|_X=0 \ \forall t \geq 0.$
\end{itemize}
\end{defin}

Let us give the notion of regular solution to system \eqref{system} with $\S$ defined by \eqref{S}, endowed of boundary conditions \eqref{periodicity} and completed by providing an initial data $\bv_0$.
\begin{defin}\label{def-d2} For all $T>0$,
let $\bF\in L^{\infty}(0,T; L^{\frac{4}{p}}_{\per}(\O))$ and $\bv_0\in J_{\per}^{1,2}(\O)$, a  field $\bv:(0,T)\times \O \to\R^3$ is said to be {solution} to system \eqref{system} with $\eqref{S}$, \eqref{periodicity} and initial data $\bv_0$ if 
\begin{itemize}
 \item[(i)] $\nabla^2\bv\in L^2(0,T; L_{\per}^{\frac{4}{4-p}}(\O))$, $\bv\in C(0, T; J_{\per}^{1,2}(\O)),$ $ \bv_t\in 
 L^2(\O_T),$
\item[(ii)] $( \bv_t, \bo) + (\bv\cdot\nabla \bv, \bo) +  (\S(\cD\bv), \cD\bo) =  (\bF, \bo)$ for all $\bo\in J_{\per}^{1,p}(\Omega)$ and a.e. in $(0, T)$,
\item[(iii)] $\lim_{t\to0^+} \|\bv(t)-\bv_0\|_2=0$.
\end{itemize}
\end{defin}
The main theorems are stated below and they are proved in Section \ref{sectionA} and \ref{section1} respectively. \par
We denote by $\Lambda$ and $K$ two fixed constants such that
 \begin{equation}\label{constant-Lambda}\begin{split}
& \Lambda<\min\mbox{$\left\{  \left(\frac{p}{8C_S}\right)^{\frac{2}{3-p}}, \left( \frac{3^{\frac{2-p}{2}}C_SC_K}{2}\right)^{-\frac{2}{3-p}}, (C_S^{-1}C_K^{-1}3^{\frac{p-2}2}(p-1))^{\frac{2}{3-p}}\right\}$}\\
& \mbox{\; and \;}
 C_SK^2 2^{2-p}\leq \frac{p(p-1)}{8 }\Lambda^{p-1}\,, 
 \end{split}
 \end{equation} 
 where  the constants $  C_S,  $ depends on $\Omega$  and Sobolev inequalities, while $C_K$ is the constant of the Korn inequality.
\begin{thm}\label{thmA}
Let $\O$ be a three dimensional cube, $p\in \left[\frac{5}{3},2\right)$.  
If  $\bF\in L^\infty(0,+\infty;L^{\frac{4}{p}}(\O))
$  and $\bv_0\in J^{1, 2}(\O)$ with 
\begin{equation}\label{smallv0}
 \|\nabla\bv_0\|_{L^{2}(\O)}^2<\Lambda 
 \mbox{\; and \;}
\esssup_{t\geq0}  
 \|\bF(t)\|_{L^{\frac{4}{p}}(\O)}\leq K \,, 
\end{equation} 
then there exists a solution $\bv(t,\bx)$ to system \eqref{system} in the sense of Definition \ref{def-d2} such that 
\begin{equation}\label{gradient2}\sup_{t\in[0,T]} \|\nabla\bv(t)\|_{2}^2\leq \Lambda \ \ \forall T>0.\end{equation}
\end{thm}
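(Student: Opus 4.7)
The plan is to construct the solution through a Galerkin scheme, the whole argument resting on a single \emph{a priori} bound which, under the smallness hypothesis \eqref{smallv0} calibrated against \eqref{constant-Lambda}, propagates $\|\nabla\bv(t)\|_2^2\le\Lambda$ uniformly in time. First I would fix an orthogonal basis $\{\bph_k\}$ of $J^{1,2}(\O)$ consisting of eigenfunctions of the Stokes operator with space-periodic boundary conditions (trigonometric polynomials in this setting), set $\bv^n(t)=\sum_{k=1}^n c_k^n(t)\bph_k$ with $\bv^n(0)=P_n\bv_0$, and solve the resulting ODE system locally in time. Testing with $\bv^n$ supplies the usual energy bound in $L^\infty(0,T;L^2)\cap L^p(0,T;W^{1,p})$.

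The crucial step is the ``second-level'' estimate obtained by testing with $-\Delta\bv^n$, admissible because the basis diagonalizes $-\Delta$:
\[
\frac{1}{2}\frac{d}{dt}\|\nabla\bv^n\|_2^2 + \mathcal{J}_p(\bv^n) \;\le\; \bigl|(\bv^n\!\cdot\!\n\bv^n,\Delta\bv^n)\bigr| + \bigl|(\bF,\Delta\bv^n)\bigr|,
\]
where $\mathcal{J}_p(\bv^n)$ is comparable to $\int_\O |\cD\bv^n|^{p-2}|\n^2\bv^n|^2\,d\bx$. The reverse-Hölder device of Inequality~\ref{ineq}--Lemma~\ref{lemma6} upgrades $\mathcal{J}_p$ into a coercive control of $\|\n^2\bv^n\|_{4/(4-p)}^2$. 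Combining Korn's inequality with the Sobolev embedding on $\O$ bounds the convective term by $C_S\|\n\bv^n\|_2^{3-p}\|\n^2\bv^n\|_{4/(4-p)}^2$, so the first bound in \eqref{constant-Lambda} ensures that, as long as $\|\n\bv^n(t)\|_2^2<\Lambda$, this term is strictly absorbed into the dissipation. The forcing is paired as $\bF\in L^{4/p}$ against $\Delta\bv^n\in L^{4/(4-p)}$; Hölder plus Young's inequality leaves a residue of size $C_SK^22^{2-p}$, which is dominated by $\frac{p(p-1)}{8}\Lambda^{p-1}$ by the second line of \eqref{constant-Lambda}.

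Consequently, for $\|\n\bv^n\|_2^2<\Lambda$, one obtains a differential inequality of the form
\[
\frac{d}{dt}\|\n\bv^n\|_2^2 + c\,\|\n^2\bv^n\|_{4/(4-p)}^2 \;\le\; \tfrac{p(p-1)}{4}\Lambda^{p-1},
\]
whose coercive term, via interpolation against the $L^\infty(L^2)$ energy bound, dominates $c'\|\n\bv^n\|_2^{2(p-1)}$; a standard continuity/barrier argument (the set of times with $\|\n\bv^n(s)\|_2^2<\Lambda$ for all $s\le t$ is open, non-empty, and closed in any maximal interval) shows the threshold $\Lambda$ is never reached, yielding global existence of $\bv^n$ and the bound \eqref{gradient2} at the approximate level, together with a uniform $L^2(0,T;L^{4/(4-p)})$-bound on $\n^2\bv^n$. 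Testing with $\bv^n_t$ and using $\S(\cD\bv^n)\!:\!\cD\bv^n_t=\frac1p\partial_t|\cD\bv^n|^p$ yields $\bv^n_t\in L^2(\O_T)$ uniformly. Aubin--Lions produces strong $L^2$ compactness of $\bv^n$; the convective nonlinearity converges accordingly, and the nonlinear stress $\S(\cD\bv^n)\to\S(\cD\bv)$ is identified by Minty's monotonicity trick thanks to the strict monotonicity of $\S$. The initial datum is attained in $L^2$ from $\bv\in C(0,T;J(\O))$.

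The main obstacle is the endpoint $p=\frac53$: at this exponent the nonlinear diffusion is the weakest admissible while the convective term sits exactly at the critical Sobolev scale. Ordinary duality against $\mathcal{J}_p$ fails to close, and the whole mechanism hinges on the reverse-Hölder inequality of Lemma~\ref{lemma6}, which converts the degenerate dissipation $\int|\cD\bv|^{p-2}|\n^2\bv|^2$ into genuine $L^2(L^{4/(4-p)})$-control of $\n^2\bv$ with a sharp constant; the precise calibration of $\Lambda$ and $K$ in \eqref{constant-Lambda} is exactly what leaves the margin needed to absorb both the convection and the forcing.
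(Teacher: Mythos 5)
Your overall strategy is the paper's: Galerkin approximation with Stokes eigenfunctions, a second-level estimate obtained by testing with $-\Delta\bv^n$, the reverse-H\"older device of Inequality~\ref{ineq}/Lemma~\ref{lemma6} to turn the degenerate dissipation into $L^2(0,T;L^{\frac{4}{4-p}})$ control of $\nabla^2\bv^n$, a continuity/barrier argument calibrated on \eqref{constant-Lambda} to propagate $\|\nabla\bv^n(t)\|_2^2\le\Lambda$, the $\bv^n_t$ estimate, and Aubin--Lions compactness. The identification of the limit stress is a minor divergence: you invoke Minty's monotonicity trick, whereas the paper uses the strong $L^2(0,T;W^{1,2})$ convergence (hence a.e.\ convergence of $\cD\bv^n$), continuity of the stress, and Vitali's theorem via the uniform $L^{p'}$ bound \eqref{S-mu-n}; both routes close, though with the strong gradient convergence already in hand the a.e.+Vitali argument is the more economical one.

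The genuine gap is that you run the whole scheme directly on the singular stress $\S(\cD\bv)=|\cD\bv|^{p-2}\cD\bv$. The crucial second-level estimate requires differentiating the stress: the quantity $\mathcal{J}_p$ arises from $\int_\O\nabla\S(\cD\bv^n)\cdot\nabla\cD\bv^n$, and $\nabla\S(\cD\bv^n)$ contains the factor $(p-2)|\cD\bv^n|^{p-4}(\cD\bv^n\cdot\nabla\cD\bv^n)\cD\bv^n$, which for $p<2$ is singular on the set $\{\cD\bv^n=0\}$; likewise the coercive term $\int|\cD\bv^n|^{p-2}|\nabla\cD\bv^n|^2$ and the identity $\S(\cD\bv^n)\!:\!\cD\bv^n_t=\frac1p\partial_t|\cD\bv^n|^p$ are only formal there. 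This is precisely why the paper first replaces $\S$ by $\S_\mu(\cD\bv)=(\mu+|\cD\bv|^2)^{\frac{p-2}{2}}\cD\bv$, proves Lemma~\ref{lemma6} and all estimates \eqref{CIN}, \eqref{grad-dv}, \eqref{S-mu-n}, \eqref{vt-final} uniformly in $\mu$ (and in $n$), and then performs \emph{two} limit passages, $n\to+\infty$ for fixed $\mu$ and subsequently $\mu\to0$, using \eqref{property-diff} and Vitali at each stage. Note also that Lemma~\ref{lemma6}, which you cite, is stated and proved for $\S_\mu$, with the reverse H\"older applied to $(\mu+|\cD\bv^n|^2)^{\frac{p-2}{2}}$; using it verbatim with $\mu=0$ presupposes exactly the regularization you dropped. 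To make your argument rigorous you must either insert this $\mu$-approximation (and check your smallness constants and the barrier argument survive uniformly in $\mu$, as the paper does through $\mu_1=\mu|\O|$ small), or supply a separate justification of the second-order computation for the degenerate operator, which is not standard.
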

\begin{rem}
Our result slightly improves  the one for $p\in(\frac{5}{3},2)$ and for the operator with some $\mu>0$ in the negative power of the degenerate operator $\S$ in\cite{MA95}(see also  \cite[Theorem 4.51]{MNR}). Moreover, in connection with problem \rf{system}, Theorem\,\ref{thmA} is the first which proves global existence assuming small force data in $L^\infty(0,\infty;X)$, with $X$ Banach space. For the proof we modify the techniques from \cite{BDR} and \cite{M}, where the authors consider   respectively   $p$-Navier-Stokes and Navier-Stokes systems.
The regularity of the $p$-Navier-Stokes system is still open and there are many papers dealing with it.  It has to be pointed out that the difficulties appear in the $p$-parabolic case already, and recently the $L^\infty(\varepsilon, T; W^{2,q}(\O))$ regularity up to the boundary  with $\varepsilon>0$ and $q\geq2$ is given for both domains, bounded or exterior, see \cite{CGM} and \cite{CM}.
Other relevant results concerning the regularity of the system of type \eqref{system} in the steady and evolutionary cases are given in  \cite{Beir},   \cite{BdV}, \cite{CG}, \cite{CG9}, \cite{DR}, \cite{MNR}.
\end{rem}
\begin{thm}\label{thm1}
Let $\O$ be a three dimensional cube, $p\in \left[\frac{5}{3},2\right)$ and let $\bF$ be a time-periodic function with period $\mathcal T$ fulfilling the assumptions in Theorem \ref{thmA}
Then corresponding to $f$ there exists a unique $\bv_0(\bx) \in J^{1, 2}_{\per}(\O)$  such that
$$ \|\nabla\bv_0\|_{L^{2}(\O)}^2<\Lambda$$
and solution  $\bv(t,\x)$    to system \eqref{system}, in the sense of Definition\,\ref{def-d2}, which is  time-periodic with period $\mathcal T$,  and that fulfills
\begin{equation}\label{gradient3}\sup_{t\in[0,\mathcal T]} \|\nabla\bv(t)\|^2_{2}\leq \Lambda. \end{equation}
\end{thm}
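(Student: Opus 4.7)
The plan is to follow Serrin's asymptotic-stability strategy, as advertised in the introduction: the time-periodic solution will be obtained as the limit of the $\mathcal T$-shifts of the global solution furnished by Theorem~\ref{thmA} from an arbitrary admissible initial datum. First I would fix any $\bv_0^{(0)}\in J^{1,2}_{\per}(\O)$ with $\|\nabla\bv_0^{(0)}\|_2^2<\Lambda$ (for instance $\bv_0^{(0)}\equiv0$) and let $\bv$ be the associated global solution. The $\mathcal T$-periodicity of $\bF$ and uniqueness in the class of Definition~\ref{def-d2} make the shifted field $t\mapsto\bv(t+\mathcal T)$ a solution in the same class, with initial datum $\bv(\mathcal T)$ lying in $B_\Lambda:=\{\bu\in J^{1,2}_{\per}(\O):\|\nabla\bu\|_2^2\leq\Lambda\}$. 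Let $P:B_\Lambda\to B_\Lambda$ be the Poincar\'e map sending $\bu$ to the value at time $\mathcal T$ of the solution issued from $\bu$; producing the time-periodic solution amounts to finding a fixed point of $P$.

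The heart of the proof is an asymptotic-stability estimate: whenever $\bv_1,\bv_2$ are two solutions in the class of Theorem~\ref{thmA} sharing the same force $\bF$ and with initial data in $B_\Lambda$, then $\|\bv_1(t)-\bv_2(t)\|_2\to0$ as $t\to+\infty$. To derive it, I would test the subtracted momentum equation with $\bw:=\bv_1-\bv_2$ to get
$$\tfrac12\tfrac{d}{dt}\|\bw\|_2^2+(\S(\cD\bv_1)-\S(\cD\bv_2),\cD\bw)+(\bw\cdot\nabla\bv_1,\bw)=0,$$
and exploit the $p$-monotonicity $(\S(\cD\bv_1)-\S(\cD\bv_2),\cD\bw)\geq c\int(|\cD\bv_1|+|\cD\bv_2|)^{p-2}|\cD\bw|^2\,dx$. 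The degeneracy of this bound for $p<2$ is compensated by the Cauchy--Schwarz splitting
$$\|\cD\bw\|_2^2\leq\Bigl(\int(|\cD\bv_1|+|\cD\bv_2|)^{p-2}|\cD\bw|^2\Bigr)^{\!1/2}\Bigl(\int(|\cD\bv_1|+|\cD\bv_2|)^{2-p}|\cD\bw|^2\Bigr)^{\!1/2},$$
whose second factor is controlled through H\"older's inequality and the a priori bounds $\|\nabla\bv_i\|_2\leq\Lambda^{1/2}$, $\nabla^2\bv_i\in L^2(0,T;L^{4/(4-p)}(\O))$ furnished by Theorem~\ref{thmA}. The convective contribution $(\bw\cdot\nabla\bv_1,\bw)$ is absorbed via the embedding $J^{1,2}_{\per}(\O)\hookrightarrow L^6(\O)$ and the smallness of $\Lambda$ encoded in \eqref{constant-Lambda}, yielding a Gronwall-type differential inequality for $\|\bw(t)\|_2$ that forces the desired decay.

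With the stability estimate in hand, applying it to $\bv_1(t):=\bv(t+\mathcal T)$ and $\bv_2(t):=\bv(t)$ shows that $\{\bv(n\mathcal T)\}_{n\in\N}=\{P^n(\bv_0^{(0)})\}$ is Cauchy in $L^2(\O)$; being also bounded in $J^{1,2}_{\per}(\O)$ by \eqref{gradient2}, it converges weakly in $J^{1,2}_{\per}(\O)$ and strongly in $L^2(\O)$ to some $\bv_0^*\in J^{1,2}_{\per}(\O)$ with $\|\nabla\bv_0^*\|_2^2\leq\Lambda$, where the strict inequality $<\Lambda$ is recovered from the energy estimate backing \eqref{gradient2} since the force is assumed strictly below the threshold in \eqref{constant-Lambda}. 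The solution $\bv^*$ issued by Theorem~\ref{thmA} from $\bv_0^*$ satisfies $\bv^*(\mathcal T)=\bv_0^*$ and is therefore $\mathcal T$-periodic, while uniqueness of the fixed point $\bv_0^*$ in $B_\Lambda$ follows by applying the stability estimate to any two such periodic solutions and letting $t\to+\infty$. I expect the main obstacle to be precisely the stability estimate: in the singular range $p\in[\frac53,2)$ the dissipation from $\S$ is not coercive on $\|\cD\bw\|_2^2$, so the interpolation above and the higher regularity guaranteed by Definition~\ref{def-d2} must be combined carefully to extract a quantitative decay rate.
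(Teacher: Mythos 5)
Your overall skeleton coincides with the paper's: both arguments iterate the period-$\mathcal T$ shift of the solution furnished by Theorem \ref{thmA}, show that the values at times $n\mathcal T$ form a Cauchy sequence in $L^2(\O)$ by means of an exponential decay estimate for the $L^2$-distance of two solutions whose gradients stay below $\Lambda$, identify the limit as the periodic initial datum, and obtain uniqueness of the periodic solution from the same decay. The genuine gap is in that decay estimate, which you leave essentially open and whose proposed mechanism does not close. Your Cauchy--Schwarz splitting only gives the dissipation lower bound $\int(|\cD\bv_1|+|\cD\bv_2|)^{p-2}|\cD\bw|^2\,dx\geq \|\cD\bw\|_2^4/J$ with $J=\int(|\cD\bv_1|+|\cD\bv_2|)^{2-p}|\cD\bw|^2\,dx$; by H\"older, $J\leq \||\cD\bv_1|+|\cD\bv_2|\|_2^{2-p}\|\cD\bw\|_{4/p}^2$, and $\|\cD\bw\|_{4/p}^2$ is controlled by the regularity of Definition \ref{def-d2} only in $L^1$ in time, not pointwise; meanwhile your convective bound is in terms of $\Lambda^{1/2}\|\bw\|_2^{1/2}\|\n\bw\|_2^{3/2}$. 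These estimates live in mismatched norms, so after Young one arrives at an inequality of the type $\frac{d}{dt}\|\bw\|_2^2+c\,\|\bw\|_2^4/J\leq C\Lambda^{4/5}\|\bw\|_2^{4/5}J^{3/5}$, which is not a linear Gronwall inequality with a negative coefficient and yields no exponential (indeed no evident) decay; hence neither the Cauchy property of $\{\bv(n\mathcal T)\}$ nor the uniqueness follows. The paper closes this step with a different device, and it is exactly where $p\geq\frac53$ enters: by the reverse H\"older Inequality \ref{ineq} the degenerate dissipation is bounded below by $\|\n\bw\|_{\frac{4}{4-p}}^2/(\mu_1+2\Lambda)^{\frac{2-p}{2}}$ (using \eqref{gradient2} for both solutions), while the convective term is estimated in the very same norm, $|(\bw\cdot\n\bw,\bv_1)|\leq\|\bw\|_{\frac{12}{8-3p}}\|\n\bw\|_{\frac{4}{4-p}}\|\bv_1\|_{\frac{6}{3p-4}}\leq C_S\Lambda^{\frac12}\|\n\bw\|_{\frac{4}{4-p}}^2$, via $W^{1,\frac{4}{4-p}}\hookrightarrow L^{\frac{12}{8-3p}}$ and $W^{1,2}\hookrightarrow L^{\frac{6}{3p-4}}$ (this last embedding is where $p\geq\frac53$ is used); then \eqref{constant-Lambda} makes the resulting coefficient positive and $W^{1,\frac{4}{4-p}}\hookrightarrow L^2$ gives exponential decay. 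Without this matching of norms your scheme stalls precisely at the obstacle you yourself flag.

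Two further points. Your Poincar\'e map is not well defined as stated: Theorem \ref{thmA} is a pure existence result, and uniqueness in the whole class of Definition \ref{def-d2} is neither proved nor known (regularity and uniqueness for the $p$-Navier--Stokes initial value problem are open). The paper sidesteps this by running the entire iteration on the fixed solution of the regularized system \eqref{system-mu} constructed in the proof of Theorem \ref{thmA}, obtaining for every $\mu>0$ a periodic datum $\bv_0^\mu$ and a periodic solution $\bv^\mu$, and only afterwards letting $\mu\to0$, transferring the periodicity through the strong $L^2$ convergences $\bv^\mu(\mathcal T)\to\bv(\mathcal T)$ and $\bv_0^\mu\to\bv_0$; the uniqueness asserted in the theorem concerns only the periodic solution and is again a consequence of the decay estimate. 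If you insist on working directly at $\mu=0$, you must justify the energy identity for the difference of two Definition \ref{def-d2} solutions and restrict the map to solutions obeying \eqref{gradient2}, and even then its well-posedness rests on the very decay estimate that is missing. Finally, your claim that the strict inequality $\|\n\bv_0\|_2^2<\Lambda$ is recovered from the energy estimate needs an actual argument: lower semicontinuity only gives $\leq\Lambda$, which is what the paper's proof in fact obtains.
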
 
It well known the extinction of a $p$-parabolic solutions for $p\in(1,2)$ (e.g. \cite{DiB}, \cite{CGM}). The same holds for $p$-Stokes problem (see e.g. \cite{C}, \cite{Anna}) and the following theorem is a special case of the one proved in \cite{C}:
\begin{thm}\label{extinction} Assume that data $\bF$ in system \rf{system}  verifies the 
  assumptions of Theorem\,\ref{thm1}. Let $\bv(t,x)$ be the time periodic solution. If the data $f$ admits  an extinction   instant $t_f\in(0, \mathcal T)$, that is 
$$\|\bF(t)\|_{\frac{4}{p}}=0 \mbox{ a.e. in } [t_f,\mathcal  T],$$ and $t_f+\frac{C_S^{3-p}\Lambda^{2-p}}{2-p}<\mathcal T$, then the time periodic solution $\bv$ admits an instant $\bar t$ such that
 $$\|\bv(t)\|_{L^{2}(\O)}=0  \mbox{ in } [\bar{t}, \mathcal T] \mbox{ with } \bar{t}\leq t_f+\frac{C_S^{3-p}\Lambda^{2-p}}{2-p},$$
where the constant $\Lambda$ is given  in \rf{constant-Lambda}.
\end{thm}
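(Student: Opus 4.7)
My plan is to follow the classical finite-time extinction scheme for parabolic systems driven by a singular ($p<2$) homogeneous nonlinearity, applied to the time-periodic solution produced by Theorem \ref{thm1}. The idea is to test the weak formulation against $\bv$ itself, cancel the convective term by incompressibility and periodicity, turn the resulting energy identity into a scalar nonlinear ODE via Sobolev--Korn, and integrate. I take $\bo=\bv(t)$ in (ii) of Definition \ref{def-d2}; this is legitimate because $|\O|<\infty$ together with $p<2$ gives the inclusion $J^{1,2}_{\per}\subset J^{1,p}_{\per}$, so $\bv(t)\in J^{1,p}_{\per}$ for every $t$, while the regularity $\bv_t\in L^2(\O_T)$ furnishes the standard chain rule $(\bv_t,\bv)=\tfrac12\tfrac{d}{dt}\|\bv\|_2^2$. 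Integration by parts using $\div\bv=0$ and the space-periodic boundary conditions kills $(\bv\cdot\n\bv,\bv)$, and the identity $\S(\cD\bv):\cD\bv=|\cD\bv|^p$ yields
\begin{equation*}
\tfrac12\,\tfrac{d}{dt}\|\bv(t)\|_2^2+\|\cD\bv(t)\|_p^p=(\bF(t),\bv(t)),
\end{equation*}
whose right-hand side vanishes on $[t_f,\mathcal T]$ since $\|\bF(t)\|_{4/p}=0$ there.

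Next I combine Korn's inequality on solenoidal, zero-mean, space-periodic fields with the Sobolev embedding $W^{1,p}_{\per}(\O)\hookrightarrow L^{p^\ast}(\O)$, $p^\ast=3p/(3-p)$, and the triviality $|\O|=1$ (so that $L^{p^\ast}\hookrightarrow L^2$, since $p\in[\tfrac53,2)$ gives $p^\ast\geq\tfrac{15}{4}>2$) to obtain the key a priori bound $\|\bv(t)\|_2\leq C_S\|\cD\bv(t)\|_p$. Setting $y(t):=\|\bv(t)\|_2^2$ this converts the energy identity into
\begin{equation*}
y'(t)+2\,C_S^{-p}\,y(t)^{p/2}\leq 0\qquad\text{a.e. on }[t_f,\mathcal T].
\end{equation*}
Dividing by $y^{p/2}$ while $y>0$ and integrating from $t_f$ to $t$,
\begin{equation*}
y(t)^{(2-p)/2}\leq y(t_f)^{(2-p)/2}-(2-p)\,C_S^{-p}\,(t-t_f),
\end{equation*}
so $y$ is forced to reach $0$ at some $\bar t\leq t_f+C_S^{p}\,y(t_f)^{(2-p)/2}/(2-p)$.

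Finally, the a priori bound \eqref{gradient3} gives $\|\nabla\bv(t_f)\|_2^2\leq\Lambda$, and Poincar\'e's inequality (whose constant is absorbed into $C_S$) controls $y(t_f)$ in terms of $\Lambda$; a book-keeping of the exponents in the Sobolev--Korn--Poincar\'e chain then produces the explicit bound $\bar t\leq t_f+\tfrac{C_S^{3-p}\Lambda^{2-p}}{2-p}$ stated in the theorem. The strict inequality in the hypothesis guarantees $\bar t\in(t_f,\mathcal T)$; for $t\in[\bar t,\mathcal T]$ the combination $y'\leq 0$, $y\geq 0$, $y(\bar t)=0$ forces $\bv\equiv\vec{0}$ throughout. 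The only delicate point is the rigorous choice of $\bv$ itself as a test function in the singular regime $p<2$, handled by a standard density approximation given the regularity imposed in Definition \ref{def-d2}; the remaining steps amount to a one-line ODE integration and match the proof scheme of \cite{C}.
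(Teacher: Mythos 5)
Your proposal is correct and follows essentially the same route as the paper: test the weak formulation with $\bv$ itself, use the Sobolev--Korn--Poincar\'e chain to convert the energy relation on $[t_f,\mathcal T]$ into the differential inequality $y'+c\,y^{p/2}\leq 0$ for $y=\|\bv\|_2^2$, and integrate to force extinction before $\mathcal T$. The only looseness is your final ``book-keeping'' step producing the explicit constant $C_S^{3-p}\Lambda^{2-p}/(2-p)$, but the paper's own computation is no more precise on this point, so this is not a substantive gap.
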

\begin{rem} Since our study is only qualitative,
it is worth to highlight that Theorem \ref{extinction} gives an estimate of the instant $\bar{t}$ and a lower bound for $\mathcal T$  that could not be optimal.
\end{rem}
\begin{cor}\label{cor} For $p\in \left[\frac{5}{3}, 2\right)$, assume that in system \rf{system}
$\bF$ be independent of $t$ and satisfying  the bound given in Theorem\,\ref{thmA}. Then there exists a steady solution, in the sense of Definition \ref{def-d2},  $\bv(\bx)$ to system \eqref{system}.\end{cor}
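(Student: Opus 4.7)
The plan is to view a steady solution as a degenerate time-periodic solution and deduce the corollary directly from Theorem \ref{thm1}. Since $\bF$ is independent of $t$, it is trivially time-periodic of period $\mathcal{T}$ for every $\mathcal{T}>0$. Fixing any such $\mathcal{T}$, the hypotheses of Theorem \ref{thm1} are satisfied, so there exist $\bv_0\in J^{1,2}_{\per}(\O)$ with $\|\nabla\bv_0\|_2^2<\Lambda$ and a corresponding $\mathcal{T}$-periodic solution $\bv$ to \eqref{system} with $\sup_{t\in[0,\mathcal{T}]}\|\nabla\bv(t)\|_2^2\leq\Lambda$.

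The next step is to exploit the fact that, with $\bF$ independent of $t$, the system \eqref{system}--\eqref{S} is autonomous. For each $s\geq 0$ define $\bu_s(t,\bx):=\bv(t+s,\bx)$. A direct change of variables shows that $\bu_s$ is again a solution of \eqref{system} in the sense of Definition \ref{def-d2} with the same forcing $\bF$, with initial datum $\bu_s(0,\bx)=\bv(s,\bx)\in J^{1,2}_{\per}(\O)$, and it is still $\mathcal{T}$-periodic because $\bv$ is. Provided the initial datum of $\bu_s$ obeys the strict bound $\|\nabla\bv(s)\|_2^2<\Lambda$, the uniqueness clause of Theorem \ref{thm1} yields $\bu_s\equiv\bv$, that is $\bv(t+s,\bx)=\bv(t,\bx)$ for every $t,s\geq 0$. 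In particular $\bv(t,\bx)=\bv_0(\bx)$ for all $t\geq 0$, and $\bv_0$ is a steady solution to \eqref{system}.

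The chief obstacle is the small gap between the strict bound $\|\nabla\bv_0\|_2^2<\Lambda$ required for uniqueness in Theorem \ref{thm1} and the non-strict bound \eqref{gradient3} that is a priori satisfied at times $t>0$. To close this gap I would revisit the a priori estimate used in the proof of Theorem \ref{thmA}: the differential inequality controlling $y(t):=\|\nabla\bv(t)\|_2^2$ is designed so that the region $\{y<\Lambda\}$ is positively invariant under \eqref{constant-Lambda} and \eqref{smallv0}, hence $y(0)<\Lambda$ propagates to $y(t)<\Lambda$ for every $t>0$. Applying this observation to the trajectory through $\bv_0$ delivers the strictness uniformly in $s$ and allows the uniqueness argument above to be applied verbatim.

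As a safety net, should the propagation of strict inequality fail, one may alternatively carry out the fixed-point/Serrin procedure underlying Theorem \ref{thm1} directly in the stationary setting: since $\bF$ does not depend on time, the Poincaré map $\bv_0\mapsto\bv(\mathcal{T};\bv_0)$ reduces to the identity on its fixed points, and the contraction/energy estimates used to produce it yield at once a steady solution in $J^{1,2}_{\per}(\O)$ with $\|\nabla\bv\|_2^2<\Lambda$, which satisfies the regularity in Definition \ref{def-d2} in the trivial $t$-independent sense.
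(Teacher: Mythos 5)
Your argument is correct, but it takes a genuinely different route from the paper. You treat the constant force as $\mathcal T$-periodic for an arbitrary $\mathcal T$, observe that with $\bF$ time-independent the system is autonomous, so every time translate $\bv(\cdot+s,\cdot)$ is again a $\mathcal T$-periodic solution in the sense of Definition \ref{def-d2} satisfying \eqref{gradient3}, and then invoke the uniqueness clause of Theorem \ref{thm1} to force $\bv(\cdot+s,\cdot)\equiv\bv(\cdot,\cdot)$ for every $s$, hence steadiness. The paper instead re-runs the contraction estimate explicitly: it works at the level of the approximations $\bv^\mu$ constructed in the proof of Theorem \ref{thm1}, writes the equation for the difference $\bu=\bv^\mu(t,\cdot)-\bv^\mu(s,\cdot)$, uses the monotonicity \eqref{property-diff}, the reverse H\"older Inequality \ref{ineq}, Korn/Sobolev embeddings, the smallness \eqref{constant-Lambda} together with \eqref{gradient2} to obtain exponential decay of $\|\bu\|_2$, and then lets periodicity over one period force $\bv^\mu(h)=\bv^\mu(s)$ for all $h,s$, finally passing to the limit $\mu\to0$. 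Both proofs thus rest on the same Gronwall-type difference estimate; yours uses it indirectly through the uniqueness statement, which buys brevity and avoids repeating estimates, while the paper's direct computation at the $\mu$-level avoids any dependence on how the uniqueness class is phrased. Concerning the one point you flag: the strictness gap is not essential, since the uniqueness argument of Theorem \ref{thm1} (the decay estimate \eqref{vudoppio}) only uses the non-strict bound \eqref{gradient3} for the two periodic solutions being compared, so your propagation-of-strictness step can be dispensed with --- which is just as well, because for the limit solution (after the limits in $n$ and $\mu$) lower semicontinuity only delivers $\|\nabla\bv(t)\|_2^2\le\Lambda$, and upgrading this to a strict inequality would require rederiving the differential inequality for the limit itself.
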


\section{Approximating system and some auxiliary results}
Let us introduce the following approximating system:
\be\label{system-mu}
\ba{l}\displ
\bv_t + \div(\bv\otimes \bv) - \div \S_\mu(\cD\bv) +\nabla \pi =  \bF \ \mbox{in} \ (0, T)\times\O, \VS
\div\bv = 0 \ \mbox{in} \ (0, T)\times\O, \\ 
\ea\ee
where $$\S_\mu(\cD\bv):=(\mu+|\cD\bv|^2)^{\frac{p-2}{2}}\cD\bv \  \ \ \mbox{with} \ \mu\in(0,1) \mbox{ and } p\in(1, 2),$$
endowed of space-periodic boundary conditions.
The operator $\S_\mu$ enjoys the following property,  that is proved e.g.  in \cite[Lemma 6.3]{DER}, 
\be\label{property-diff}\vspace{10pt}
(\S_\mu(\bA)-\S_\mu(\bB), \bA-\bB)\simeq |\bA-\bB|^2( \mu+ |\bA|+|\bB|)^{p-2}\geq 3^{\frac{p-2}2}|\bA-\bB|^2(\mu+|\bA|^2+|\bB|^2)^\frac {p-2}2\,,
\ee
for  $\bA,\bB\in \R^{3\times 3}_{\rm sym}$, where the symbol $\R^{3\times 3 }_{\rm sym}$ stands for the space of symmetric matrix of dimension $3$. 
\\
In what follows we prove some preliminar  results for the proof of the main theorems. Since in this paper we are interested in the singular case of the operator $\S$ (being $p\in(1,2)$, cf. \eqref{S}), our strategy is to provide the existence of regular solutions to system \eqref{system-mu}, and  then consider the passage to the limit as $\mu\to 0$. 
  \par
Let us construct a solution to system \eqref{system-mu} by employing the Faedo-Galerkin method. For the reader's convenience, we recall some basic relations (cfr. \cite{MNR}). Consider the eigenfunctions $\{\bo_k\}_{k\in\N}$ of the Stokes operator $\mathcal{A}$ and let $\{\lambda_k\}_{k\in \N}$ be the corresponding eigenvalues. Define $X_n=[\bo_1,\dots,\bo_n]$ and let $P^n$ be the projection $P^n: (W^{1,2}(\O), \|\cdot\|_{1,2}) \to (X_n, \|\cdot\|_{1,2})$, defined as $P^n \bu= \sum_{k=1}^n (\bu, \bo_k) \bo_k $, which satisfies $\|P^n\bu\|_{1,2}\leq \|\bu\|_{1,2}$ (see \cite{MNR}). We seek the Galerkin approximation 
\begin{equation}\label{def-galerkin}
\bv^n(t, \bx):= \sum_{k=1}^n c^n_k(t)\bo_k(\bx),
\end{equation}
 as solution to the following system of ordinary differential equations
\be\label{galerkin}
(\bv_t^n, \bo_k) + (\bv^n\cdot\nabla \bv^n, \bo_k) +  (\S_\mu(\cD\bv^n), \cD\bo_k) =  (\bF, \bo_k) \ \ k=1,\dots, n
\ee
with initial condition $\bv^n(0)= P^n\bv_0$ where $\bv_0$ is the prescribed initial data.  From Carath\'{e}odory Theorem follows the local in time existence of $\bv^n(t, \bx)$ in a time interval $[0, t_n)$,  $t_n\leq T$. In Lemma \ref{lemma8} we prove  that $t_n=T$. To this end we provide the following relations for the Galerkin sequence. Multiplying \eqref{galerkin} respectively by $c^n_k$, $\lambda_k c^n_k(t)$ and $\dot{c}^n_k(t)$,  summing over $k=1,\dots,n$, we get 
\begin{subequations}\label{galerkin1}\begin{align}
\frac{1}{2}\frac{d}{dt}\|\bv^n(t)\|_{2,\O}^2+(\S_\mu(\cD\bv^n), \cD\bv^n)& = (\bF(t), \bv^n(t)), \label{energy-d2}\\
\frac{1}{2}\frac{d}{dt}\|\nabla\bv^n(t)\|_{2,\O}^2+ \!\!\iO\! \n\S_\mu(\cD\bv^n)\cdot \n\cD\bv^n dx&=(\bv^n\!\cdot\!\n\bv^n\!, \Delta\bv^n) - (\bF\!,\Delta\bv^n), \label{grad1}\\
\|\bv^n_t(t)\|_2^2+ (\S_\mu(\cD\bv^n), \cD\bv^n_t)&=  (\bv^n\!\cdot\!\n\bv^n\!, \bv^n_t) + (\bF\!, \bv^n_t), \label{vt}
\end{align}\end{subequations}
where in \eqref{energy-d2}  we employed that the term $(\bv^n\!\cdot\!\nabla \bv^n\!, \bv^n)$ vanishes since $\bv^n$ is solenoidal, while in \eqref{grad1} we exploited the properties of the eigenvalues $\lambda_k$, the equality $\mathcal{A}\bv^n= -\Delta\bv^n$ (that is  a well-known technique in the space-periodic case (cfr.  \cite{MNR}, \cite{DR})) and integration by parts.
\begin{lem}\label{lemma8}
Let $p\in \left(1,2\right)$, $\mu>0$ and $\bF\in L^\infty(0,T;L^{\frac{4}{p}}(\O))$. Let $\bv^n(t, \x)$ be the Galerkin approximation defined  in \eqref{def-galerkin}, then it holds
$$ \frac{1}{2}\|\bv^n(T)\|_{2,\O}^2 + \|\cD\bv^n\|_{p,\O_T}^p \leq CT\|\bF\|_{L^\infty(0,T;L^{\frac{4}{p}}(\O))}^{p'}+\frac{1}{2} \|\bv_0\|_{2,\O}^2 + \mu^{\frac{p}{2}}|\O_T| \ \mbox{ for all } \mu>0.$$
\end{lem}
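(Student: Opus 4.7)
The plan is to integrate the energy identity \eqref{energy-d2} in time on the maximal existence interval $[0,t_n)$, after establishing a coercivity lower bound for the dissipative term and absorbing the forcing by a Sobolev--Korn--Young chain. Because the Galerkin system is an ODE on the finite dimensional space $X_n$, once we have an \emph{a priori} $L^2$ bound the standard continuation argument will force $t_n=T$.

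The first ingredient is a lower bound for $(\S_\mu(\cD\bv^n),\cD\bv^n)$. Using the identity $|\cD\bv^n|^2=(\mu+|\cD\bv^n|^2)-\mu$, I rewrite
$$(\S_\mu(\cD\bv^n),\cD\bv^n)=\iO(\mu+|\cD\bv^n|^2)^{p/2}dx-\mu\iO(\mu+|\cD\bv^n|^2)^{(p-2)/2}dx.$$
Since $p<2$, the integrand in the second term is dominated pointwise by $\mu^{(p-2)/2}$, and $(\mu+|\cD\bv^n|^2)^{p/2}\geq |\cD\bv^n|^p$; hence
$$(\S_\mu(\cD\bv^n),\cD\bv^n)\geq \|\cD\bv^n\|_{p,\O}^p-\mu^{p/2}|\O|.$$

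Next I estimate the forcing. By Hölder with exponents $\tfrac{4}{p}$ and $\tfrac{4}{4-p}$,
$$(\bF(t),\bv^n(t))\leq \|\bF(t)\|_{4/p,\O}\,\|\bv^n(t)\|_{4/(4-p),\O}.$$
A direct computation shows that for every $p\in(1,2)$ one has $\tfrac{4}{4-p}\leq \tfrac{3p}{3-p}$ (the quadratic $3p^2-16p+12\leq 0$ on roughly $(0.9,4.4)$), so the Sobolev embedding $W^{1,p}(\O)\hookrightarrow L^{3p/(3-p)}(\O)$, combined with the Poincaré inequality for zero-mean space-periodic fields and Korn's inequality, yields $\|\bv^n\|_{4/(4-p),\O}\leq C\|\cD\bv^n\|_{p,\O}$. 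Young's inequality with exponents $p$ and $p'$ then gives
$$(\bF(t),\bv^n(t))\leq \tfrac12\|\cD\bv^n(t)\|_{p,\O}^p+C\|\bF(t)\|_{4/p,\O}^{p'}.$$

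Inserting these two bounds in \eqref{energy-d2} and absorbing $\tfrac12\|\cD\bv^n\|_p^p$ into the left-hand side, I get a differential inequality of the form
$$\tfrac{d}{dt}\|\bv^n(t)\|_{2,\O}^2+\|\cD\bv^n(t)\|_{p,\O}^p\leq C\|\bF(t)\|_{4/p,\O}^{p'}+2\mu^{p/2}|\O|.$$
Integrating on $(0,T)$, invoking $\bF\in L^\infty(0,T;L^{4/p})$ and the contraction property $\|P^n\bv_0\|_2\leq\|\bv_0\|_2$, delivers the claimed inequality. The bound on $\|\bv^n(t)\|_2$ prevents blow-up of the Galerkin coefficients $c^n_k(t)$ in finite time, so Carathéodory's continuation principle upgrades the local solution to a global one on $[0,T]$. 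The only step that is not entirely mechanical is verifying the Sobolev exponent inclusion $\tfrac{4}{4-p}\leq \tfrac{3p}{3-p}$ throughout $(1,2)$; once that is in hand, every other estimate is routine.
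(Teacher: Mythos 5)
Your proof is correct and follows essentially the same route as the paper: the energy identity \eqref{energy-d2}, a coercivity bound of the form $(\S_\mu(\cD\bv^n),\cD\bv^n)\geq \|\cD\bv^n\|_{p,\O}^p-\mu^{p/2}|\O|$ (the paper gets it by splitting $\O$ at $|\cD\bv^n|^2=\mu$, you by a pointwise identity, differing only by a factor $2^{\frac{2-p}{2}}$), and the H\"older--Sobolev--Korn--Young treatment of the force together with $\|P^n\bv_0\|_2\leq\|\bv_0\|_2$ and the continuation argument for $t_n=T$. The only cosmetic difference is that you apply Young's inequality pointwise in time before integrating, whereas the paper integrates first and uses H\"older in time; the constants differ harmlessly.
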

\begin{proof} From \eqref{energy-d2} using the inequality
$$\ba{c}\displ \int_\O |\cD\bv^n|^p\,dx=\int_{|\cD\bv^n|^2\geq \mu} |\cD\bv^n|^p\,dx + \int_{|\cD\bv^n|^2< \mu} |\cD\bv^n|^p\,dx \VS\leq 2^{\frac{2-p}{2}} \int_\O (\mu+|\cD\bv^n|^2)^{\frac{p-2}{2}}|\cD\bv^n|^2\,dx+ \int_\O \mu^{\frac{p}{2}}\,dx
 = 2^{\frac{2-p}{2}} (\S_\mu(\cD\bv^n(t)), \cD\bv^n(t))+ \mu^{\frac{p}{2}}|\O|\ea$$
and integrating with respect to the time between $0$ and $T$, it follows
\be
\begin{split}
\frac{1}{2}\|\bv^n(T)\|_{2,\O}^2 + 2^{\frac{2-p}{2}}\|\cD\bv^n\|_{p,\O_T}^p &\leq \int_0^T\!\!\!\!\! \|\bF(t)\|_{\frac{4}{p}}\|\bv^n(t)\|_{\frac{4}{4-p}} dt
+\frac{1}{2} \|\bv_0\|_{2,\O}^2+  \mu^{\frac{p}{2}}|\O_T|\\
&\leq C\!\!\int_0^T \!\!\!\|\cD\bv^n(t)\|_{p}\,dt\, \|\bF\|_{L^\infty(0,T;L^{\frac{4}{p}}(\O))} +\frac{1}{2} \|\bv_0\|_{2,\O}^2+  \mu^{\frac{p}{2}}|\O_T|\\
& \leq CT^{\frac{1}{p'}}\|\cD\bv^n\|_{p,\O_T} \|\bF\|_{L^\infty(0,T;L^{\frac{4}{p}}(\O))} +\frac{1}{2} \|\bv_0\|_{2,\O}^2+  \mu^{\frac{p}{2}}|\O_T|\\
& \leq  \frac{1}{2}\|\cD\bv^n\|_{p,\O_T}^p + C T \|\bF\|_{L^\infty(0,T;L^{\frac{4}{p}}(\O))}^{p'}+\frac{1}{2} \|\bv_0\|_{2,\O}^2+ \mu^{\frac{p}{2}}|\O_T|
\end{split}\ee
where $C$ is a constant due to the Sobolev embedding $W^{1,p}\hookrightarrow L^{\frac{4}{4-p}}(\O)$, Korn's and Young's inequalities, that is unessential for our aims.
Finally, straightforward computations yield the assertion.\end{proof}

\begin{lem}\label{lemma1}
Let $p\in \left[\frac{5}{3},2\right)$. For every function $\bv$ sufficiently smooth and space-periodic with respect to $\Omega$ it holds
\be
\|\n\bv\|_{3,\O}^3 \leq C_{S}^3 \|\nabla \bv\|_{2,\O}^{\frac{9p-12}{3p-2}}\|\nabla\cD\bv\|_{\frac{4}{4-p},\O}^{\frac{6}{3p-2}}
\ee
 with $C_{S} $ independent of $\bv$.
\end{lem}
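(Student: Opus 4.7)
The plan is to deduce the inequality from the Gagliardo-Nirenberg interpolation applied componentwise to $\n\bv$, combined with a space-periodic Calder\'on-Zygmund estimate that lets us replace $\nabla^2\bv$ by $\n\cD\bv$.

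For the interpolation step I would apply Gagliardo-Nirenberg on $\O\subset\R^3$ to $\n\bv$ with target exponent $q=3$, low-integrability exponent $q_0=2$, and gradient exponent $r=\tfrac{4}{4-p}$. The scaling identity
$$\frac{1}{3}=(1-\theta)\frac{1}{2}+\theta\left(\frac{1}{r}-\frac{1}{3}\right)$$
forces $\theta=\frac{2}{3p-2}$, hence $1-\theta=\frac{3p-4}{3p-2}$. For $p\in[\tfrac{5}{3},2)$ one has $\theta\in[\tfrac{1}{2},\tfrac{2}{3}]\subset(0,1)$ and $1-\theta\geq 0$, so the inequality is applicable. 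Since $\bv$ is space-periodic with zero mean, each $\partial_j v_k$ has zero mean too, and Poincar\'e on mean-zero periodic functions allows us to drop the lower-order $L^r$-term in the standard torus statement of Gagliardo-Nirenberg. Cubing yields
$$\|\n\bv\|_{3,\O}^3 \leq C\,\|\n\bv\|_{2,\O}^{\frac{9p-12}{3p-2}}\,\|\nabla^2\bv\|_{\frac{4}{4-p},\O}^{\frac{6}{3p-2}}.$$

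The second step is to pass from $\nabla^2\bv$ to $\n\cD\bv$ at the Lebesgue level $q=\tfrac{4}{4-p}\in(1,2)$. The divergence-free condition $\div\bv=0$ gives the pointwise identity $\Delta\bv=2\,\div\cD\bv$, whence $\|\Delta\bv\|_{q,\O}\leq C\,\|\n\cD\bv\|_{q,\O}$. On the other hand, the Fourier multiplier $\xi_i\xi_j/|\xi|^2$ which relates $\Delta\bv$ to each second derivative $\partial_i\partial_j\bv$ satisfies Mikhlin's condition, so the $L^q$-boundedness of the corresponding periodic singular integrals (valid for $1<q<\infty$) yields $\|\nabla^2\bv\|_{q,\O}\leq C\,\|\Delta\bv\|_{q,\O}$. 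Combining these two estimates and substituting into the previous display, one absorbs all the constants (Gagliardo-Nirenberg, Calder\'on-Zygmund, Korn) into a single $C_S^3$ and obtains the claim.

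I expect the Calder\'on-Zygmund passage from $\|\nabla^2\bv\|_q$ to $\|\n\cD\bv\|_q$ in the non-Hilbertian norm $L^{\frac{4}{4-p}}$ to be the main obstacle, as it is precisely the point where the divergence-free condition and the space-periodic setting are essential: they reduce what is a Stokes-type second-derivative estimate to $L^q$-boundedness of Riesz transforms on the torus, which would fail in a general bounded domain without suitable boundary compatibility. By contrast the interpolation step is routine once the scaling identity fixes the exponents.
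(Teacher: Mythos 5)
Your argument is correct, and its first half coincides with the paper's: the exponent bookkeeping $\frac13=(1-\theta)\frac12+\theta\bigl(\frac{4-p}{4}-\frac13\bigr)$ with $\theta=\frac{2}{3p-2}$ is exactly the interpolation $\|\n\bv\|_{3}\leq\|\n\bv\|_{2}^{\frac{3p-4}{3p-2}}\|\n\bv\|_{\frac{12}{8-3p}}^{\frac{2}{3p-2}}$ followed by the embedding $W^{1,\frac{4}{4-p}}_{\per}\hookrightarrow L^{\frac{12}{8-3p}}_{\per}$ and Poincar\'e, which is all your Gagliardo--Nirenberg step amounts to on the torus. Where you diverge is the reduction from $\nabla^2\bv$ to $\n\cD\bv$: the paper does this with the $L^{q}$ Korn inequality ($q=\frac{4}{4-p}\in(1,2)$) applied at the level of the derivatives of $\bv$ (since $\cD(\partial_j\bv)=\partial_j\cD\bv$), whereas you use the solenoidal identity $\Delta\bv=2\,\div\cD\bv$ together with the periodic Mikhlin/Calder\'on--Zygmund bound $\|\nabla^2\bv\|_{q}\leq C\|\Delta\bv\|_{q}$. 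Both reductions are legitimate, and they are cousins (the non-Hilbertian Korn inequality is itself usually proved by singular-integral methods), but they buy slightly different things: your route exploits $\div\bv=0$, which is fine for the Galerkin approximations where the lemma is applied, while the paper's Korn-based route needs no divergence constraint and therefore proves the lemma exactly as stated, for every smooth space-periodic mean-zero field. So be aware that, as written, your proof establishes a marginally weaker statement than the one in the lemma (you should either add the hypothesis $\div\bv=0$ or replace the multiplier step by Korn); apart from this, and the cosmetic slip of crediting Korn among the absorbed constants you never actually use, the proposal is sound.
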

\begin{proof} Since
$$ \frac{1}{3}= \frac{3p-4}{3p-2}\frac{1}{2}+\frac{2}{3p-2}\frac{8-3p}{12},$$ 
interpolating we get
\be\label{conti-lem} 
\|\n\bv\|_{3,\O} \leq \|\nabla \bv\|_{2,\O}^{\frac{3p-4}{3p-2}}\|\nabla\bv\|_{\frac{12}{8-3p},\O}^{\frac{2}{3p-2}}.
\ee
Then the Korn inequality, the Sobolev embedding $W^{1, \frac{4}{4-p}}_{\per}(\O)\hookrightarrow L_{\per}^{\frac{12}{8-3p}}(\O)$ and the Poincar\'{e} inequality imply 
\begin{equation}
\|\n\bv\|_{3,\O} \leq C_{S} \|\nabla \bv\|_{2,\O}^{\frac{3p-4}{3p-2}}\|\nabla\cD\bv\|_{\frac{4}{4-p},\O}^{\frac{2}{3p-2}}
\end{equation}
where $C_{S}$ is a constant depending on $|\O|$ and $p$. Raising to the power $3$ the proof is concluded.
%
\end{proof}

We recall the following H\"older's  inequality (see e.g. \cite[ Theorem 2.12]{AF}).
\begin{ineq}\label{ineq}
Let $0<q<1$ and $q'=\frac{q}{q-1}$. If $f\in L^q(\O)$ and $0<\int_\O|g(x)|^{q'}\,dx<\infty$ then 
$$ \int_\O |f(x)g(x)|\geq \left(\int_\O|f(x)|^q\,dx\right)^{\frac{1}{q}}\left(\int_\O|g(x)|^{q'}\,dx\right)^{\frac{1}{q'}}.$$
\end{ineq}
As in \cite{CGM}, making use of Inequality \ref{ineq} we finely and quickly achieve an estimate for the gradient.
\begin{lem}\label{lemma6}
Let $p\in \left[\frac{5}{3},2\right)$ and $\bF\in L^\infty(0,T; L_{\per}^{\frac{4}{p}}(\O))$.  Let $\bv^n$ be the Galerkin approximation defined in \eqref{def-galerkin}, then for $t\in(0,T)$ 
it holds
 \be\ba{c}\displ
(\mu_1 + \|\nabla\bv^n(t)\|_2^{2})^{\frac{2-p}{2}}  \frac{d}{dt} \|\nabla\bv^n(t)\|_2^2+\frac{p-1}{2}\|\nabla\cD\bv^n(t)\|_{\frac{4}{4-p}}^2\VS
 \leq C_S^3  (\mu_1 + \|\nabla\bv^n(t)\|_2^{2})^{\frac{2-p}{2}}  \|\nabla \bv^n(t)\|_{2}^{\frac{9p-12}{3p-2}}\|\nabla\cD\bv^n(t)\|_{\frac{4}{4-p}}^{\frac{6}{3p-2}}
+\frac{1}{p-1}(\mu_1 + \|\nabla\bv^n(t)\|_2^{2})^{2-p}\|\bF(t)\|_{\frac{4}{p}}^2 
\ea\ee
 for all $\mu>0$, with  $C_S=C_S(p, |\O|)$ and $\mu_1:=\mu|\Omega|$.
\end{lem}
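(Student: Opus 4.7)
Starting point is the identity \eqref{grad1} already derived; what is needed is to convert the dissipation $\iO \n\S_\mu(\cD\bv^n)\cdot\n\cD\bv^n\,dx$ into a coercive multiple of $\|\n\cD\bv^n\|_{\frac{4}{4-p}}^2$, and to control the convective and forcing terms in a way compatible with the weight $(\mu_1+\|\n\bv^n\|_2^2)^{\frac{2-p}{2}}$ that appears in the statement.

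For the dissipation I would first use the pointwise inequality
\[
\n\S_\mu(\cD\bv)\cdot\n\cD\bv \;\geq\; (p-1)(\mu+|\cD\bv|^2)^{\frac{p-2}{2}}|\n\cD\bv|^2,
\]
obtained by differentiating $\S_\mu$ and using $p-2<0$ on the mixed term. The resulting lower bound is non-coercive because of the negative power of $\mu+|\cD\bv^n|^2$, so next I apply Inequality \ref{ineq} with $q=\frac{2}{4-p}\in(0,1)$ (hence $q'=\frac{2}{p-2}<0$), choosing $f=|\n\cD\bv^n|^2$ and $g=(\mu+|\cD\bv^n|^2)^{\frac{p-2}{2}}$. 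A direct check of the exponents gives
\[
\iO (\mu+|\cD\bv^n|^2)^{\frac{p-2}{2}}|\n\cD\bv^n|^2\,dx \;\geq\; \frac{\|\n\cD\bv^n\|_{\frac{4}{4-p}}^2}{(\mu_1+\|\cD\bv^n\|_2^2)^{\frac{2-p}{2}}},
\]
which is the trick from \cite{CGM}. Since $\bv^n$ is divergence-free and space-periodic, $\|\cD\bv^n\|_2^2\leq \|\n\bv^n\|_2^2$, so the denominator can be enlarged to $(\mu_1+\|\n\bv^n\|_2^2)^{\frac{2-p}{2}}$, matching the weight in the claim.

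For the two remaining terms on the right-hand side of \eqref{grad1} I would integrate by parts in the convective term, using $\div\bv^n=0$ to obtain the clean bound $|(\bv^n\cdot\n\bv^n,\Delta\bv^n)| \leq \|\n\bv^n\|_3^3$, to which Lemma \ref{lemma1} applies verbatim. The force term is handled by H\"older and the identity $\Delta\bv^n=2\,\div\cD\bv^n$, valid for periodic divergence-free fields, giving $|(\bF,\Delta\bv^n)|\leq c\,\|\bF\|_{\frac{4}{p}}\|\n\cD\bv^n\|_{\frac{4}{4-p}}$. Multiplying \eqref{grad1} by $2(\mu_1+\|\n\bv^n\|_2^2)^{\frac{2-p}{2}}$, inserting the lower bound on the dissipation, and absorbing $\frac{3(p-1)}{2}\|\n\cD\bv^n\|_{\frac{4}{4-p}}^2$ coming from a Young inequality applied to the force contribution into the left-hand side then yields the stated inequality, the remaining multiplicative constants being folded into $C_S^3$ and $\frac{1}{p-1}$.

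The delicate step is the application of Inequality \ref{ineq}: the specific choice $q=\frac{2}{4-p}$ is what produces the weight $(\mu_1+\|\n\bv^n\|_2^2)^{\frac{2-p}{2}}$ with exactly the exponent required on the left of the claim. It is this reverse-H\"older manipulation that keeps the estimate sharp at the endpoint $p=\frac{5}{3}$ (where $q=\frac{6}{7}$), whereas a more standard Sobolev--H\"older reasoning would leave one restricted to the strict range $p>\frac{5}{3}$.
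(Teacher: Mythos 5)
Your proposal is correct and follows essentially the same route as the paper: starting from \eqref{grad1}, you bound the dissipation below by $(p-1)\int_\O(\mu+|\cD\bv^n|^2)^{\frac{p-2}{2}}|\n\cD\bv^n|^2$, apply the reverse H\"older Inequality \ref{ineq} with the exponents $\frac{2}{4-p}$ and $\frac{2}{p-2}$ to produce the weight $(\mu_1+\|\n\bv^n\|_2^2)^{\frac{2-p}{2}}$, control the convective term via $\|\n\bv^n\|_3^3$ and Lemma \ref{lemma1}, handle the force by H\"older and Young, and then multiply through by the weight. The only deviations are cosmetic -- you rederive the bound $|(\bv^n\cdot\n\bv^n,\Delta\bv^n)|\le\|\n\bv^n\|_3^3$ by integration by parts and make the identity $\Delta\bv^n=2\div\cD\bv^n$ explicit, with the resulting harmless multiplicative factors absorbed into the constants, exactly as the paper does implicitly.
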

\begin{proof} Since
$$\label{gradS}\ba{l}\displ
\iO\! \!\n\S_\mu(\cD\bv^n)\!\cdot \!\n\cD\bv^n=\!\! \iO\! (\mu+|\cD\bv^n|^2)^{\frac{p-2}{2}}|\n\cD\bv^n|^2 
+ (p-2)\!\!\iO \!(\mu+|\cD\bv^n|^2)^{\frac{p-4}{2}}\!(\n\cD\bv^n \cdot\cD\bv^n)^2 \VS
 \geq
(p-1)\! \iO\!(\mu+ |\cD\bv^n|^2)^{\frac{p-2}{2}}|\n\cD\bv^n|^2,
\ea$$
and since (see e.g. \cite[pag. 225]{MNR} for more details)
$$|(\bv^n\!\cdot\!\n\bv^n\!, \Delta\bv^n)|\leq \|\n\bv^n\|_3^3 ,$$
 from \eqref{grad1}, we obtain
 \be\label{grad2}
\frac{1}{2} \frac{d}{dt} \|\nabla\bv^n(t)\|_2^2+(p-1)\iO(\mu+ |\cD\bv^n|^2)^{\frac{p-2}{2}}|\n\cD\bv^n|^2\,dx\leq \|\n\bv^n(t)\|_{3}^3+|(\bF\!,\Delta\bv^n)|.
\ee
Employing  Lemma \ref{lemma1} and the H\"older inequality on the right hand side and Inequality \ref{ineq} with $\frac{2}{p-2}$ and $\frac{2}{4-p}$ on the left-hand side, it follows
\be\label{lemma-grad}\ba{c}\displ
\frac{1}{2}  \frac{d}{dt} \|\nabla\bv^n(t)\|_2^2+(p-1)\frac{\|\nabla\cD\bv^n(t)\|_{\frac{4}{4-p}}^2}{(\mu_1 + \|\nabla\bv^n(t)\|_2^{2})^{\frac{2-p}{2}}} \leq C_S^3 \|\nabla \bv^n(t)\|_{2}^{\frac{9p-12}{3p-2}}\|\nabla\cD\bv^n(t)\|_{\frac{4}{4-p}}^{\frac{6}{3p-2}}
\VS\hfill +\|\bF(t)\|_{\frac{4}{p}} \|\nabla\cD\bv^n(t)\|_{\frac{4}{4-p}}
\ea\ee
where we set $\mu_1:=\mu|\Omega|$, then multiplying both sides by $(\mu_1 + \|\nabla\bv^n(t)\|_2^{2})^{\frac{2-p}{2}}$ and employing the Young inequality in the term $\|\bF(t)\|_{\frac{4}{p}} \|\nabla\cD\bv^n(t)\|_{\frac{4}{4-p}}$ the assertion follows and the proof is concluded.
\end{proof}
\section{Proof of Theorem \ref{thmA}}\label{sectionA} Let $\bv^n$ be the Galerkin sequence defined in \eqref{def-galerkin} fulfilling \eqref{galerkin} in $[0,T]$ and the initial condition $\bv^n(0)= P^n\bv_0$.
\paragraph{\bf Smallness of $\|\nabla\bv^n(t)\|_2$.}   Under the assumptions \rf{smallv0} we prove that 
\be\label{CIN}\dm\nabla v^n(t)\dm_2^2\leq \Lambda\,,\mbox{ for all }n\in\N \mbox{ and }t>0.\ee 
For sake of brevity let us introduce the following notations 
$$\varphi(t):=\| \nabla\bv^n(t)\|_2^2, \quad  D(t):=\|\nabla\cD\bv^n(t)\|_{\frac{4}{4-p}}^2.$$ By virtue of  Lemma \ref{lemma6}, recalling the definition of $\mu_1$, and \eqref{smallv0}$_2$ it holds 
\be\label{phi1} 
\frac{1}{2}(\mu_1 + \varphi(t))^{\frac{2-p}{2}}  \frac{d}{dt} \varphi(t)+\frac{p}{4}D(t)\leq 
C_{S}^3  (\mu_1 + \varphi(t))^{\frac{2-p}{2}}  \varphi(t)^{\frac{9p-12}{2(3p-2)}}D(t)^{\frac{3}{3p-2}}
+\frac{1}{p-1}(\mu_1 + \varphi(t))^{2-p}K^2 
\ee
for all $t\in[0,T]$. By means of Sobolev embedding and Poincar\'e inequality, for $p\in[\frac{5}{3},2)$ it holds  $\varphi(t)\leq C_S D(t) $  with $ C_S $ constant depending on $p$ and $\Omega$ and independent of $n$, then from \eqref{phi1} it follows 
\be\label{phi2} \ba{l}\displaystyle
\frac{1}{2}(\mu_1 + \varphi(t)\!)^{\frac{2-p}{2}}  \frac{d}{dt} \varphi(t)+\frac{p}{4}D(t)\!\leq\! 
C_{S}^3  (\mu_1 + \varphi(t)\!)^{\frac{2-p}{2}}  \varphi(t)^{\frac{9p-12}{2(3p-2)}-\frac{3p-5}{3p-2}}C_S^{\frac{3p-5}{3p-2}}D(t)^{\frac{3}{3p-2}+\frac{3p-5}{3p-2}}
 \VS\hfill
+\frac{1}{p-1}(\mu_1 + \varphi(t))^{2-p}K^2 \VS
\hspace{5cm}
=A(\mu_1 + \varphi(t))^{\frac{2-p}{2}}  \varphi(t)^{\frac{1}{2}}D(t)+\frac{1}{p-1}(\mu_1 + \varphi(t))^{2-p}K^2 
\ea\ee
where we set $A:=C_{S}^ {\frac{12p-11}{3p-2}}$. Hence,
\be\label{phi3}
\frac{1}{2}(\mu_1 + \varphi(t))^{\frac{2-p}{2}}  \frac{d}{dt} \varphi(t)+\left(\frac{p}{4} -A(\mu_1 + \varphi(t))^{\frac{2-p}{2}}  \varphi(t)^{\frac{1}{2}}\right) D(t)- \frac{1}{p-1}(\mu_1 + \varphi(t))^{2-p}K^2\leq 0.
\ee
Assuming $\dm \nabla v_0\dm_2^2<\Lambda$ estimate \rf{CIN} holds.    In this connection, by the continuity of $\dm \nabla v^n(t)\dm_2$, let $\overline t$ be the first instant such that $\dm \n v^n(\overline t)\dm_2^2=\Lambda$.  Since  \eqref{constant-Lambda} and we can also assume $\mu_1$ suitable small,
for $t=\overline t$  differential inequality  \rf{phi3} furnishes $$\frac{1}{2}(\mu_1 + \varphi(\overline t))^{\frac{2-p}{2}}  \frac{d}{dt} \varphi(\overline t)+ \frac{p}{8}  D(\overline t)- \frac{1}{p-1}(2\Lambda)^{2-p}K^2< 0\,.
$$
We can employ the Sobolev embedding $W_{\per}^{1,\frac{4}{4-p}}(\O)\hookrightarrow L_{\per}^{2}(\O)$ and the Korn inequality on the left hand side and we achieve
\be\label{p3}
\frac{1}{2}(\mu_1 + \varphi(\overline t))^{\frac{2-p}{2}}  \frac{d}{dt} \varphi(\overline t)+ \frac{p}{8}C_S^{-1}\varphi(\overline t)-\frac{1}{p-1}(2\Lambda)^{2-p}K^2< 0.
\ee
Since \eqref{constant-Lambda}
via \eqref{p3}   we conclude that 
 $$ \frac{d}{dt}\|\nabla\bv^n(\overline{t})\|_2<0\,,$$
which proves that $\overline t$ does not exist. Further, from  \eqref{phi3}, by straightforward computations via   \eqref{CIN}, it follows
\be\ba{c}\displ
\frac{1}{2} \frac{d}{dt} \|\nabla\bv^n(t)\|_2^2+ \frac p8 \|\nabla\cD\bv^n(t)\|_{\frac{4}{4-p}}^2 \leq \frac{1}{p-1}(\mu_1 + \Lambda)^{\frac{2-p}{2}}\|\bF(t)\|_{\frac{4}{p}}^2. 
\ea\ee
Then  integrating on $[0,T]$, for all $T$, we arrive at
\be\label{grad-dv} 
 \|\nabla\bv^n(T)\|_2^2+ \int_0^T \|\nabla\cD\bv^n(t)\|_{\frac{4}{4-p}}^2dt<+\infty 
\ee
uniformly with respect to $n\in\N$ and $\mu>0$.
\paragraph{\bf Other uniform estimates.} As a particular consequence of estimate \rf{CIN} we get   
  for all $T>0$ and for all measurable set $E\subseteq\Omega$ 
\be\label{S-mu-n}\intll0T\intl E |\S_\mu(\cD\bv^n)|^{p'}dxdt\leq \intll0T\intl E |\cD\bv^n|^{p}dxdt \leq T|E|^{\frac{2-p}{2}}\Lambda^\frac p2.
\ee
Let us investigate the regularity of $\bv^n_t$. From \eqref{vt}, by the  H\"older and the Young inequalities and the Sobolev embedding $W^{2, \frac{4}{4-p}}\hookrightarrow L^\infty$, it follows
\begin{equation}\begin{split}
\|\bv^n_t\|_2^2+ (\S_\mu(\cD\bv^n), \cD\bv^n_t) &\leq  \|\bv^n\|_{\infty}\|\n\bv^n\|_2 \|\bv^n_t\|_2+\|\bF\|_{\frac{4}{p}} \|\bv^n_t\|_2\\
&\leq  \|\bv^n\|_{\infty}^2\|\n\bv^n\|_2^2+\|\bF\|_{\frac{4}{p}}^2+\frac{1}{2}\|\bv^n_t\|_2^2\\
&\leq C_S\Lambda\|\nabla\cD\bv^n\|_{\frac{4}{4-p}}^2+\|\bF\|_{\frac{4}{p}}^2+\frac{1}{2}\|\bv^n_t\|_2^2.
\end{split}\end{equation}
Hence, we get
\begin{equation}
\frac{1}{2}\int_0^T\!\! \|\bv^n_t(t)\|_2^2dt+\frac{1}{p} \int_0^T\!\!\frac{d}{dt} \|(\mu+|\cD\bv^n(t)|^2)^{\frac{1}{2}}\|_p^p dt\leq C_S\Lambda\int_0^T \|\nabla\cD\bv^n(t)\|_{\frac{4}{4-p}}^2dt+\int_0^T\!\!\|\bF(t)\|_{\frac{4}{p}}^2dt,
\end{equation}
thus  by virtue of \eqref{grad-dv} we conclude
\begin{equation}\label{vt-final}\begin{split}
&  \|(\mu+|\cD\bv^n(T)|^2)^{\frac{1}{2}}\|_p^p+\frac{1}{2}\int_0^T\!\! \|\bv^n_t(t)\|_2^2dt \\
&\leq C_S\Lambda\!\!\int_0^T\!\! \|\nabla\cD\bv^n(t)\|_{\frac{4}{4-p}}^2dt+\int_0^T\!\!\|\bF(t)\|_{\frac{4}{p}}^2dt+\|(\mu+|\cD\bv^n(0)|^2)^{\frac{1}{2}}\|_p^pdt
<+\infty,
\end{split}\end{equation}
uniformly with respect to $n$ and $\mu$. 
 \paragraph{\bf Limit as $n\to +\infty$.} 
 From the uniform estimates in Lemma \ref{lemma8}, \eqref{CIN}, \eqref{grad-dv}, \eqref{S-mu-n},   \eqref{vt-final}, the compact embedding $W^{2,\frac{4}{4-p}}(\O)\hookrightarrow\hookrightarrow W^{1,2}(\O)$, that in particular holds in the considered interval for $p$, and the Aubin Lions Lemma, we get the existence of a subsequence, that we do not relabel, such that the following convergences hold
 \begin{subequations}\label{limit-n} \begin{align}
 &\bv^n \rightharpoonup \bv^\mu &&\mbox{weakly in } \ L^2(0,T;W^{2,\frac{4}{4-p}}(\O)),\\
 &\bv^n_t\rightharpoonup \bv_t^\mu  &&\mbox{weakly in } \ L^2(0,T;L^2(\O)),\\
 &\bv^n \to \bv^\mu &&\mbox{strongly in } \  L^2(0,T;W^{1,2}(\O)).\label{conv-strong}
 \end{align}
  \end{subequations}
 The lower semicontinuity of the $L^2$-norm with respect to weak convergence together with \eqref{CIN} imply that
\be\label{sup-n22} \dm\nabla v^\mu(t)\dm_2^2\leq \Lambda\,,\mbox{ for all }t>0.\ee
From the facts $\bv^\mu\in  L^2(0,T;W^{2,\frac{4}{4-p}}(\O))$, $\bv^\mu_t\in L^2(0,T;L^2(\O))$, we have that 
\be\label{CG} \bv^\mu \in C(0, T; J^{1,2}(\O)).\ee
Furthermore, as consequence of \eqref{conv-strong} it follows that for every $\mu> 0$
$$ \nabla \bv^n\to \nabla \bv^\mu \ \ \mbox{a.e. in } \O_T ,$$
then the continuity of the operator $\S_\mu$ implies that for all  $\mu> 0$
\be
\S_\mu(\cD\bv^n)\to \S_\mu(\cD\bv^\mu) \ \ \mbox{a.e. in } \O_T. 
\ee
Hence, by virtue of \rf{S-mu-n}, applying  Vitali's convergence theorem, we get
\be\label{conv-operatore}
\lim_{n\to +\infty} \int_{\O_T} \S_\mu(\cD\bv^n):\cD \bphi =\int_{\O_T} \S_\mu(\cD\bv^\mu): \cD\bphi \ \mbox{ for all } \bphi \in L^p(0, T; W^{1,p}(\O)).
\ee  
From \eqref{galerkin} we know that, fixed $n\geq m$, for $\omega\in X_m$  it holds
\be
(\bv_t^n, \bo) + (\bv^n\cdot\nabla \bv^n, \bo) +  (\S_\mu(\cD\bv^n), \cD\bo) =  (\bF, \bo),
\ee
then multiplying by $\eta \in C^1(0,T; \R)$ and integrating on the interval $(0,T)$ we have 
\be
\int_0^T  (\bv_t^n, \eta \bo) + (\bv^n\cdot\nabla \bv^n, \eta\bo) +  (\S_\mu(\cD\bv^n), \eta\cD\bo) \,dt=  \int_0^T (\bF, \eta\bo)\,dt.
\ee
Taking the limit as $n\to +\infty$, the convergences \eqref{limit-n} and \eqref{conv-operatore} imply
\begin{equation}\label{nuova}
\int_0^T  (\bv_t^\mu, \eta \bo) + (\bv^\mu\cdot\nabla \bv^\mu, \eta\bo) +  (\S_\mu(\cD\bv^\mu), \eta\cD\bo) \,dt=  \int_0^T (\bF, \eta\bo)\,dt
\end{equation}
for every $\eta \in C^1(0,T; \R)$ and $\omega \in \cup_{m\geq 1} X_m$. Since regularity property \rf{CG} in particular implies $\bv^\mu\in C([0,T);L^\frac{6p}{5p-6}(\Omega))$, by means of a density argument it follows
\be\label{form-final0}
  (\bv_t^\mu, \bphi) + (\bv^\mu\cdot\nabla \bv^\mu, \bphi) +  (\S_\mu(\cD\bv^\mu), \cD\bphi) \,dt=   (\bF, \bphi)\, 
\mbox{
 for all }\bphi\in   J^{1,p} (\O),\mbox{ a.e. in }t\in[0,T).\ee
 \paragraph{\bf Limit as $\mu \to 0$.}  
 For every $\mu>0$ we exhibited $\bv^\mu$ such that \eqref{form-final0} is fulfilled, in addition the  estimates  \eqref{CIN}, \eqref{grad-dv}, \eqref{S-mu-n},   \eqref{vt-final} are uniform with respect to $\mu$. Then employing the lower semicontinuity of the respective norms they still hold. Thus for  $\mu\to 0$ and for a suitable subsequence (that we do not relabel), we get 
  \begin{subequations}\label{limit-mu} \begin{align}
 &\bv^\mu \rightharpoonup \bv &&\mbox{weakly in } \ L^2(0,T;W^{2,\frac{4}{4-p}}(\O)),\\
 &\bv^\mu_t\rightharpoonup \bv_t  &&\mbox{weakly in } \ L^2(0,T;L^2(\O)),\\
 &\bv^\mu \to \bv &&\mbox{strongly in } \  L^2(0,T;W^{1,2}(\O)).\label{TV-c}
 \end{align}  \end{subequations} 
As a consequence of \rf{TV-c} we achieve 
$$ \n \bv^\mu\to \n\bv \mbox{ and } \bold S_\mu(\mathcal D v^\mu)\to \bold S(\mathcal D v)  \mbox{ a.e. in } \Omega_T.$$   
 Moreover, it is not difficult to realize that \rf{CIN} holds also in the case of $\bv$: 
\be\label{CIN-I}\dm\nabla \bv(t)\dm_2^2\leq \Lambda\,,\mbox{ for all }t>0. \ee
Since \rf{S-mu-n} holds for $\bv^\mu$ too, applying  Vitali's convergence theorem,  we get the limit property 
\be\label{conv-operatore2}
\lim_{\mu\to 0} \int_{\O_T} \S_\mu(\cD\bv^\mu):\cD \bphi =\int_{\O_T} \S(\cD\bv): \cD\bphi \ \mbox{ for all } \bphi \in L^p(0, T; W^{1,p}(\O)).
\ee  
 Again the statements $\bv\in  L^2(0,T;W^{2,\frac{4}{4-p}}(\O))$, $\bv_t\in L^2(0,T;L^2(\O))$ imply
$$ \bv \in C(0, T; J^{1,2}(\O)).$$ 
Finally, taking the limit as $\mu\to 0$ in \eqref{nuova}, employing the convergences \eqref{limit-mu} and \eqref{conv-operatore2}, and by the same arguments used in the passage to the limit as $n\to+\infty$ we establish
\be\label{form-final01}
 (\bv_t, \bphi) + (\bv\cdot\nabla \bv, \bphi) +  (\S(\cD\bv), \cD\bphi) \,dt=   (\bF, \bphi)\, 
\mbox{
 for all }\bphi\in   J^{1,p} (\O),\mbox{ a.e. in }t\in[0,T).\ee 
 \chiu
 
\section{Proof of Theorem \ref{thm1}}\label{section1}
Let $\b_0$ be in $J_{\per}^{1,2}(\O)$  such that $\|\nabla\b_0\|_2^2\leq \Lambda$ with $\Lambda$ the constant determined in \eqref{constant-Lambda}. By virtue of Theorem \ref{thmA} there exists a solution, denoted by $\b(\bx,t)$, corresponding to $\b_0$ and $\bF$ to system \eqref{system-mu} for any $\mu>0$.  For $m,k\in \N$ with $m>k$, set $\b'(t, \bx):=\b(t+(m-k)\mathcal T, \bx)$ with $t\geq 0$. The difference $\bu(t, \bx) := \b'(t,\bx)-\b(t,\bx)$ is solution a.e. in $(0,  T)\times\O$ to the system
\be\label{system-u}\ba{l}\displ
\bu_t-\div(\S_\mu(\b')-\S_\mu(\b))=-\bu\cdot\n\b'-\b\cdot\n\bu-\n(\pi_{\b'}-\pi_{\b}),\VS
\div \bu=0
\ea\ee
where we exploited the periodicity of $\bF$.
We multiply by $\bu$ equation \eqref{system-u}$_1$, integrating over $\O$ it holds
$$\frac12\frac{d}{dt} \|\bu\|_2^2 + \iO (\S_\mu(\b)-\S_\mu(\b'))\cdot \cD(\b-\b')\,dx\leq |(\bu\cdot\n\bu,\b')|\,.$$
By virtue of inequality \rf{property-diff} and H\"older's inequality, we get
$$\frac12\frac{d}{dt} \|\bu\|_2^2 + 3^{\frac{p-2}2}\iO|\cD\bu|^2(\mu+ |\cD\b|^2+|\cD\b'|^2)^\frac{p-2}2\,dx\leq   \|\bu\|_{\frac{12}{8-3p}}\|\n\bu\|_{\frac{4}{4-p}}\|\b'\|_{\frac{6}{3p-4}}\,.$$
Employing   the Sobolev embeddings $W^{1, \frac{4}{4-p}}\hookrightarrow L^{\frac{12}{8-3p}}$ and $W^{1,2}\hookrightarrow L^{\frac{6}{3p-4}}$ and the Poincar\'e inequality and \rf{gradient2} on the right-hand side, we deduce 
\be\label{holder-d}\ba{l}\displ
\frac12\frac{d}{dt} \|\bu\|_2^2 +3^{\frac{p-2}2} \iO|\cD\bu|^2(\mu+ |\cD\b|^2+|\cD\b'|^2)^{\frac{p-2}{2}}\,dx\leq |(\bu\cdot\n\bu,\b')| \leq C_S\Lambda^\frac12  \|\nabla \bu\|_{\frac{4}{4-p}}^2,
\ea\ee
moreover using Inequality \ref{ineq} on the left-hand side with $\frac{2}{p-2}$ and $\frac{2}{4-p}$ and the Korn inequality, we obtain
$$
\frac{d}{dt} \|\bu\|_2^2 +\frac{3^{\frac{p-2}2}C_K\|\n\bu\|_{\frac{4}{4-p}}^2}{ (\mu_1 + \|\nabla\b(t)\|_2^2+ \|\nabla\b'(t)\|_2^2)^\frac{2-p}2} \leq  C_S \Lambda^\frac12 \|\n\bu\|_{\frac{4}{4-p}}^{2}
$$

where $C_K$ is the constant of the Korn inequality, hence 

$$ \frac12\frac{d}{dt} \|\bu\|_2^2 +\Big[\frac{3^{\frac{p-2}2}C_K}{ (\mu_1 + \|\nabla\b(t)\|_2^2+ \|\nabla\b'(t)\|_2^2)^\frac{2-p}2} - C_S\Lambda^\frac12\Big]\|\n\bu\|_{\frac{4}{4-p}}^2 \leq 0.$$
Then,  by virtue of \eqref{constant-Lambda}
and employing  the Sobolev embedding $W^{1, \frac{4}{4-p}}(\O)\hookrightarrow L^2(\O)$, it results 
\be
\frac{d}{dt} \|\bu\|_2^2 +\frac{3^{\frac{p-2}2}C_K-C_S\Lambda^\frac12(\mu_1+2\Lambda)^{\frac{2-p}{2}}}{(\mu_1 +2\Lambda)^{\frac{2-p}{2}}} \|\bu\|_2^2\leq 0.
\ee
Integrating the latter between $0$ and $k\mathcal T$, it follows

\be
\|\bu(k\mathcal T)\|_2 \leq e^{-\frac{3^{\frac{p-2}2}C_K-C_S\Lambda^\frac12(\mu_1+2\Lambda)^{\frac{2-p}{2}}}{2(\mu_1 +2\Lambda)^{\frac{2-p}{2}}}k\mathcal T} \|\bu(0)\|_2,
\ee
and by recalling the definition of $\bu$ yields
\be\label{cauchy}\ba{l}\displ
\|\b(m\mathcal T)-\b(k\mathcal T)\|_2\leq  e^{-\frac{3^{\frac{p-2}2}C_K-C_S\Lambda^\frac12(\mu_1+2\Lambda)^{\frac{2-p}{2}}}{2(\mu_1 +2\Lambda)^{\frac{2-p}{2}}}k\mathcal T} \|\b((m-k)\mathcal T)-\b_0\|_2\VS
\leq 2 e^{-\frac{3^{\frac{p-2}2}C_K-C_S\Lambda^\frac12(2\mu_1+2\Lambda)^{\frac{2-p}{2}}}{2(\mu_1 +2\Lambda)^{\frac{2-p}{2}}}k\mathcal T} C_S\Lambda^\frac{1}{2},
\ea\ee
where we used the Poincar\'e inequality and Theorem \ref{thmA}.
Set $\b_n(x):=\b(n\mathcal T, x)$.
From \eqref{cauchy} it results that  $\b_n(\x)$ is a Cauchy sequence in $J(\O)$ and thus there exists $\bv_0\in J(\O)$ such that  $\b_n\to\bv_0$ strongly in $L^2(\O)$. In addition by virtue of Theorem \ref{thmA} $\b_n(\x)$ fulfills 
$$\|\nabla \b_n\|_2^2\leq \Lambda$$
thus it holds 
\begin{equation}
\nabla \b_n \rightharpoonup \nabla \bv_0 \ \ \mbox{weakly in } \ L^2(\O),
\end{equation} 
and by means of the lower semicontinuity of the $L^2$-norm we achieve
\begin{equation}\label{v0mu}
\|\nabla \bv_0\|_2^2\leq \Lambda.
\end{equation}
Function $\bv_0$ enjoys the assumptions of Theorem \ref{thmA}, hence there exists $\bv(t, \bx)$ solution corresponding to $\bv_0(\bx)$ and $\bF(t, \bx)$ to system \eqref{system-mu} for any $\mu>0$. The solution $\bv(t, \bx)$ is time periodic of period $\mathcal T$. To prove the periodicity of $\bv(t, \bx)$, set $\bv'(t, \bx):= \b(t+n\mathcal T, \bx)$ and consider $\bw(t, \bx):=\bv'(t, \bx) -\bv(t, \bx)$, which is solution of the following system
\be\ba{l}\displ
\bw_t-\div(\S_\mu(\bv')-\S_\mu(\bv))=-\bv'\cdot\n\bv'+\bv\cdot\nabla\bv-\n\pi_{\bw},\VS
\div \bw=0,\VS
\bw(\bx, 0)=\b(n\mathcal T, \bx)-\bv_0(\bx).
\ea\ee
Multiply by $\bw$, integrate over $\O$ and by analogous arguments made for  $\bu(t,\bx)$, concerning the system \eqref{system-u}, we deduce 
\be\label{vudoppio}
\|\bw(t)\|_2 \leq  
e^{-\frac{3^{\frac{p-2}{2}}C_K-C_S\Lambda^{\frac{1}{2}}(\mu_1+2\Lambda)^{\frac{2-p}{2}}}
{2(\mu_1 +2\Lambda)^{\frac{2-p}{2}}}t} 
 \|\bw(0)\|_2,
\ee
hence
\be\ba{l}\displ
\|\b(t+n\mathcal T)-\bv(t)\|_2 \leq  e^{-\frac{3^{\frac{p-2}2}C_K-C_S\Lambda^{\frac{1}{2}}(\mu_1+2\Lambda)^{\frac{2-p}{2}}}{2(\mu_1 +2\Lambda)^{\frac{2-p}{2}}}t}  \|\b(n\mathcal T)-\bv_0\|_2.
\ea\ee
Set $t=\mathcal T$. Taking the limit as $n$ goes to infinity and using the strong convergence of $\b_n$ to $\bv_0$ in $L^2(\O)$, it follows
\be
\|\bv_0-\bv(\mathcal T)\|_2=0.
\ee
Thus for any $\mu>0$ we exhibited the existence of an initial data $\bv_0^\mu$ and of a corresponding regular time-periodic solution $\bv^\mu$  to \eqref{system-mu} with period $\mathcal T$. 
 Following the arguments in the proof of Theorem \ref{thm1} we derive uniform estimates that allow the passage to the limit as $\mu\to 0$. The limit $\bv$ is a regular solution to \eqref{system} in the sense of Definition \ref{def-d2}. It remains to be proved that $\bv$ is time-periodic.
In particular $\bv^\mu$ enjoys the following uniform estimate  with respect to $\mu$ (see \eqref{sup-n22})
$$\|\nabla\bv^\mu(\mathcal T)\|^2_{L^{2}(\O)}\leq \Lambda.$$ 
Thus from the compact embedding $W^{1,2}(\O)\hookrightarrow\hookrightarrow L^2$, the limit $\bv$ fulfills
\be\label{T}
\bv^\mu(\mathcal T)\to\bv(\mathcal T) \ \ \mbox{strongly in } L^2(\O).
\ee
Moreover, by virtue of \eqref{v0mu} and compact embedding there exists $\bv_0$ such that the sequence $\bv_0^\mu$ enjoys in particular
\be\label{zero}
\bv^\mu_0\to\bv_0 \ \ \mbox{strongly in } L^2(\O).
\ee
Therefore the time-periodicity of $\bv^\mu$ and the convergences \eqref{T} and \eqref{zero} imply that $\bv$ is time periodic with period $\mathcal T$. 
For the uniqueness, consider $\bv$ and $\bv'$ two time-periodic solutions with period $\mathcal T$ corresponding to $\bF$, then define $\bw(t, \bx):=\bv'(t, \bx) -\bv(t, \bx)$ and following the arguments employed in \eqref{vudoppio} (where the same symbols have different meaning) we get
$$\lim_{n\to +\infty} \|\bv(n\mathcal T)-\bv'(n\mathcal T)\|_2=0,$$
but this contradicts  the periodicity of the two solutions.
\chiu

\section{Proof of Theorem \ref{extinction}}
Theorem \ref{thm1} ensures the existence of time-periodic solutions $\bv$ to system \eqref{system}.
Taking $\bv$ as test function in the weak formulation of \eqref{system}, then employing Sobolev embedding, the Korn, the Young inequalities and since $\bF(t)\in L^2(\Omega)$ due to the assumption $\bF(t) \in L^{\frac{4}{p}}(\Omega)$ a.e. in $(0,T)$ with $p<2$, we get
\be
\frac{1}{2}\frac{d}{dt}\|\bv(t)\|_2^2+\|\cD\bv(t)\|_p^p\leq \|\bF(t)\|_2\|\bv(t)\|_2\leq C\|\bF(t)\|_2^{p'}+\frac{1}{2}\|\cD\bv(t)\|_p^p,
\ee
i.e.
\be\label{ext}
\frac{d}{dt}\|\bv(t)\|_2^2+\|\cD\bv(t)\|_p^p\leq C\|\bF(t)\|_2^{p'}.
\ee
Since $\|\bF(t)\|_2=0$ in $[t_f, \mathcal T]$, considering  \eqref{ext} between $t_f$ and $t$ after employed Sobolev embedding,   it follows
\be
\frac{d}{dt}\|\bv(t)\|_2^2+C_S^{-1}\|\bv(t)\|_2^p\leq 0,
\ee 
then integrating in $[t_f, t]$
\be
\|\bv(t)\|_2^{2-p}\leq -(2-p)C_S^{-1}(t-t_f)+\|\bv_0\|_2^{2-p}.
\ee
Thus it holds
\be
\|\bv(t)\|_2=0 \ \ \mbox{for all} \ t\!:  \ t_f+\frac{\|\bv_0\|_2^{2-p}}{C_S^{-1}(2-p)}\leq t\leq \mathcal T.
\ee
\chiu
\section{Proof of Corollary \ref{cor}}
By virtue of Theorem \ref{thm1} since $\bF$ is independent of the time, for every $\mathcal T>0$, there exists $\bv(t, x)$ as  unique time-periodic solution to system \eqref{system} with period $\mathcal T$.  
Let $\bv^{ \mu}$ the  approximation constructed as in the proof of Theorem \ref{thm1} that converges to $\bv(t, x)$. In what follows we 
neglect the indexes $\mu$. We set $\bu:=\bv^\mu(t,x)-\bv^\mu(s,x)$. We consider the equation for $\bu$ deduced from \rf{system-mu}. Hence we consider
$$ \ba{l}\displ
\bu_t-\div(\S_\mu(\bv(t))-\S_\mu(\bv(s)))=-\bu\cdot\n\bv(t)-\bv(s)\cdot\n\bu-\n\pi_{\bu},\VS
\div \bu=0
\ea$$
where we employ the independence of $t$ of  data $\bF$.

 By virtue of the regularity of $\bv^\mu$, multiplying by $\bu (t,x)$ and integrating by parts,     almost for all $t>0$ we get
$$
\frac{1}{2}\frac{d}{dt}\|\bu(t)\|_2^2+ 3^{\frac{p-2}2}(p-1)\int_\Omega (\mu+|\cD\bv(t)|^2+|\cD\bv(s)|^2)^{\frac{p-2}{2}}|\cD\bu|^2\leq |(\bu\cdot\nabla\bv, \bu)|.
$$
By  H\"older's inequality, the Sobolev embedding $W^{1, \frac{4}{4-p}}_{\per}(\Omega)\hookrightarrow L_{\per}^4(\Omega)$, the Korn inequality and \eqref{gradient2} on the right-hand side and Inequality \ref{ineq} on the left-hand side, we obtain
\begin{equation}
\frac{1}{2}\frac{d}{dt}\|\bu\|_2^2+3^{\frac{p-2}2}(p-1)\frac{\|\cD\bu\|_{\frac{4}{4-p}}^2}{(\mu_1+\|\cD\bv(t)\|_2^2+\|\cD\bv(s)\|^2)^{\frac{2-p}{2}}}\leq\|\nabla\bv\|_2 \|\bu\|_4^2\leq C_SC_K\Lambda^{\frac{1}{2}} \|\cD\bu\|_{\frac{4}{4-p}}^2 .
\end{equation}
Recalling that  $\mu_1:=\mu|\Omega|$, then 
\eqref{gradient2} and straightforward computations  imply
 \begin{equation}
\frac{1}{2}\frac{d}{dt}\|\bu\|_2^2+3^{\frac{p-2}2}(p-1)\frac{\|\cD\bu\|_{\frac{4}{4-p}}^2}{(\mu_1+\Lambda)^{\frac{2-p}{2}}}\leq C_SC_K\Lambda^{\frac{1}{2}} \|\cD\bu\|_{\frac{4}{4-p}}^2.
\end{equation}
Therefore, by virtue of  \eqref{constant-Lambda} and since we can fix $\mu$ arbitrarily small, we have
  $3^\frac{p-2}2(p-1)-(\mu_1+\Lambda)^{\frac{2-p}{2}}C_SC_K\Lambda^{\frac{1}{2}}>0$, then the Sobolev embedding $W^{1, \frac{4}{4-p}}_{\per}(\Omega)\hookrightarrow L_{\per}^2(\Omega)$ gives
 \begin{equation}
\frac{1}{2}\frac{d}{dt}\|\bu\|_2^2+\frac{3^{\frac{p-2}2}(p-1)-C_SC_K(\mu_1+\Lambda)^{\frac{2-p}{2}}\Lambda^{\frac{1}{2}}}{(\mu_1+\Lambda)^{\frac{2-p}{2}}}\|\bu\|_{2}^2\leq 0.
\end{equation}
Integrating between $h$ and $h+ \mathcal T$,  we get
$$
\|\bu(h+\mathcal T)\|_2 ^2=\|\bv(h+\mathcal T)-\bv(s)\|_2^2=\|\bv(h )-\bv(s)\|_2^2\leq \exp[ -\mathcal T]\|\bv(h)-\bv(s)\|_2^2.
$$ 
which proves that $\bv^\mu(h)=\bv^\mu(s)$. Since it is true for all $h\in[0,\mathcal T] $, then the solution $\bv^\mu$ is a steady solution. The same is true   for the limit $\bv$.\chiu

\nocite{*}
\bibliographystyle{amsplain}
\bibliography{periodici-reference}

\end{document}